\newtheorem{theorem}{Theorem}%[section]
\newtheorem{proposition}[theorem]{Proposition}
\newtheorem{lemma}[theorem]{Lemma}
\theoremstyle{definition}
\numberwithin{theorem}{section}
\newcommand{\Z}{\mathbb{Z}}
\newcommand{\F}{\mathbb{F}}
\newcommand{\SL}{\operatorname{SL}}
\newcommand{\ord}{\operatorname{ord}}
\newcommand{\pa}{\alpha}
\DeclareMathOperator{\tr}{tr}
\title[Markoff divisibility]{Divisibility by $p$ for Markoff-like Surfaces}
\author[M. de Courcy-Ireland]{Matthew de Courcy-Ireland}
\email{matthew.decourcy-ireland@math.su.se}
\address{Department of Mathematics, Stockholm University and the Nordic Institute for Theoretical Physics}
\author[M. Litman]{Matthew Litman}
\email{matthew.litman@ucd.ie}
\address{School of Mathematics and Statistics, University College Dublin}
\author[Y. Mizuno]{Yuma Mizuno}
\email{mizuno.y.aj@gmail.com}
\address{School of Mathematical Sciences, University College Cork}
\date{\today}%
\begin{document}

\begin{abstract}
    We study orbits in a family of Markoff-like surfaces with extra off-diagonal terms over prime fields $\mathbb{F}_p$.
    It is shown that, for a typical surface of this form, every non-trivial orbit has size divisible by $p$.
    This extends a theorem of W.Y. Chen from the Markoff surface itself to others in this family.
    The proof closely follows and elaborates on a recent argument of D.E. Martin.
    We expect that there is just one orbit generically.
    For some special parameters, we prove that there are at least two or four orbits.
    Cayley's cubic surface plays a role in parametrising the exceptional cases and dictating the number of solutions mod $p$.
\end{abstract}

\maketitle

\section{Introduction} \label{sec:intro}

A family of surfaces defined by the equation
\begin{equation} \label{eqn:family}
x_1^2 + x_2^2 + x_3^2 + a_1x_2x_3+a_2x_1x_3+a_3x_1x_2 = (3+a_1+a_2+a_3)x_1x_2x_3
\end{equation}
has been studied by Gyoda and Matsushita \cite{GM} for integers $a_i \geq 0$, with a special case $a=(1,1,1)$ already appearing in \cite{G} and the classical case $a=(0,0,0)$ going back to the well-known Markoff tree \cite{Markoff}.
By construction, $x=(1,1,1)$ is a solution for any choice of parameters $a_1$, $a_2$, and $a_3$.
The following moves construct new solutions from old, by changing a single variable to the other root of the quadratic equation (\ref{eqn:family}).
The moves can be written as
\begin{equation} \label{eqn:move}
m_i: x_i \mapsto -x_i + sx_{i-1}x_{i+1} - a_{i+1}x_{i-1}-a_{i-1}x_{i+1}
\end{equation}
where the index $i$ is interpreted modulo 3, and
\begin{equation} \label{eqn:sum}
s=3+a_1+a_2+a_3.
\end{equation}
Gyoda and Matsushita showed in \cite[Theorem 1.1]{GM} that all solutions in positive integers are given by repeatedly applying these moves to $x=(1,1,1)$.
Our interest is in how the integer solutions reduce modulo $p$.
%Do all the solutions in $\F_p$ come from reducing integer solutions?
Is every solution over $\F_p$ the reduction mod $p$ of a solution over $\Z$?

Recently there has been progress answering this question for the Markoff surface, which is a special case of (\ref{eqn:family}) where $a_1=a_2=a_3=0$. 
Chen \cite{Chen} showed that except for $(x_1,x_2,x_3)=(0,0,0)$, all orbits under the three moves (\ref{eqn:move}) have size divisible by $p$.
In particular, a non-trivial orbit must have size at least $p$. Combined with the work of Bourgain--Gamburd--Sarnak \cite{BGS}, this lower bound implies that for any sufficiently large prime $p$ there is just one orbit besides $(0,0,0)$ containing all of the $p^2 \pm 3p$ other solutions.
It is enough for $p$ to have a few hundred digits, more precisely $p > 3.45 \cdot 10^{392}$ is sufficient as shown in \cite{EFLMT}.

Martin \cite{Martin} gave an elementary proof of Chen's congruence for the Markoff surface. Our main result shows how it can be adapted to the situation (\ref{eqn:family}).
From now on we fix a prime $p$ and consider $a_i \in \F_p$ as elements of a finite field rather than integers.

\begin{theorem} \label{thm:congruence-lower-bound}
Assume $3+a_1+a_2+a_3 \neq 0$ and, for all $i=1,2,3$, $a_i^2 \neq 4$ in $\F_p$ for a prime $p \geq 5$.
Then, except for the orbit of size $1$ containing $(0,0,0)$, any orbit under the three moves (\ref{eqn:move}) has size divisible by $p$.

If $a_i^2=4$ for some $i$ and
\begin{equation} \label{eqn:hypo}
2a_{i-1} = a_{i+1} a_i
\end{equation}
then any orbit has size divisible by $p$.
\end{theorem}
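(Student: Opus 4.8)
The plan is to carry over Martin's elementary argument for the Markoff surface to the family (\ref{eqn:family}). Since $s\neq0$, the linear substitution $x_i\mapsto x_i+a_i/s$ clears the cross terms $a_jx_{j-1}x_{j+1}$ and brings (\ref{eqn:family}) into the Fricke normal form
\[
y_1^2+y_2^2+y_3^2-s\,y_1y_2y_3+b_1y_1+b_2y_2+b_3y_3+c=0,\qquad b_i=\frac{2a_i+a_{i-1}a_{i+1}}{s},
\]
in which the three involutions (\ref{eqn:move}) become the usual Vieta moves. A point is fixed by all three moves precisely when it is a singular point of the surface (the criterion $\partial_{x_i}F=0$ is the condition that $m_i$ fix it), so $(0,0,0)$ is the size-$1$ orbit, and the theorem reduces to: (i) in the stated range $(0,0,0)$ is the \emph{only} singular point, and (ii) every other orbit of $\Gamma=\langle m_1,m_2,m_3\rangle$ has size $\equiv0\pmod p$. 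For (ii) the mechanism is to build elements of $\Gamma$ of order exactly $p$ and extract divisibility from $|\mathcal{O}|\equiv|\operatorname{Fix}(h)\cap\mathcal{O}|\pmod p$.

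The geometric input is the conic fibration. Fixing $x_k=\gamma$ turns the surface into a conic $\mathcal{C}_{k,\gamma}$ in the other two variables, on which the two moves fixing $x_k$ act as a dihedral group with rotation $r_k:=m_{k-1}m_{k+1}$. Crucially $r_k$ always fixes a point of $\mathcal{C}_{k,\gamma}$ — the point at infinity when the fibre is smooth, the node when it degenerates — so its restriction is ``linear'' and has order dividing $p-1$, $p+1$, or $p$; the value $p$ occurs exactly on the two \emph{smooth parabolic fibres} $x_k=(a_k\pm2)/s$, where $r_k$ generates a unipotent subgroup $\cong\Z/p$ acting simply transitively on the $p$ affine points. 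The hypotheses are exactly what makes these honest smooth parabolas: $s\neq0$ makes the two values defined and, as $p\geq5$, distinct, and $a_k^2\neq4$ prevents the fibre from collapsing to a double line, a line pair, or a point. Hence $h_k:=r_k^{M_k}$, with $M_k$ the prime-to-$p$ part of the order of $r_k$, has order exactly $p$, acts trivially on every non-parabolic $x_k$-fibre, and acts fixed-point-freely on the affine points of the two parabolic ones; the congruence above then yields, for every orbit $\mathcal{O}$ and every $k$,
\[
\bigl|\mathcal{O}\cap(\text{the two parabolic }x_k\text{-slices})\bigr|\equiv0\pmod p .
\]

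Upgrading these three congruences to $p\mid|\mathcal{O}|$ is the step I expect to be the main obstacle, because it must also control the solutions of (\ref{eqn:family}) lying on none of the six parabolic slices. I would attempt this by (a) showing every non-trivial orbit meets at least one parabolic slice — here one uses that $r_k$ is transitive on a parabolic fibre, so $\mathcal{O}\cap\mathcal{C}_{k,\gamma}$ is empty or all $p$ points — or else handling the (expected rare) orbits that do not meet any; and (b) combining the three congruences by inclusion–exclusion, the pairwise intersections of parabolic slices being finite sets whose configuration, together with the point count $\#X^*(\F_p)$ and the exceptional parameter values, is governed by (a form of) Cayley's cubic surface, as flagged in the abstract, and reconciling this with the exact prime-to-$p$ cycle lengths of $\langle m_{k-1},m_{k+1}\rangle$ on the smooth and line-pair fibres. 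The point count is a routine but lengthy Jacobi-sum computation over the conic fibration. Claim (i) is a direct elimination: under $s\neq0$ and $a_i^2\neq4$ the three partials of (\ref{eqn:family}) have no common zero on the surface besides the origin (for special parameters the surface acquires up to four nodes, Cayley's cubic being the $4$-nodal case, and each extra node is an extra size-$1$ orbit).

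For the case $a_i^2=4$, write $a_i=2\varepsilon$ with $\varepsilon=\pm1$. Then one of the two parabolic $x_i$-slices, namely $x_i=0$, degenerates from a smooth parabola to the double line $(x_{i-1}+\varepsilon x_{i+1})^2=0$, so the construction loses that slice; worse, computing the singular locus of (\ref{eqn:family}) shows the surface then has a second node on the plane $x_i=0$, at the point with $x_{i-1}+\varepsilon x_{i+1}=0$ and $x_{i+1}=(a_{i+1}-a_{i-1})/(s\varepsilon)$, and this node — being fixed by all three moves — is an extra orbit of size $1$ unless it coincides with the origin. That coincidence is exactly $a_{i-1}=\varepsilon a_{i+1}$, i.e. (\ref{eqn:hypo}): under it the origin is again the only singular point, the five surviving parabolic slices remain smooth parabolas, and the argument of the previous two paragraphs goes through with five slices in place of six, the inclusion–exclusion and the point count adjusted accordingly. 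Without (\ref{eqn:hypo}) the second node really is a size-$1$ orbit distinct from $(0,0,0)$, so a hypothesis of this shape is unavoidable.
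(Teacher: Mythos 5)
There is a genuine gap, and it sits exactly where you flag it. Your mechanism is the Bourgain--Gamburd--Sarnak rotation trick: on the two parabolic fibres $x_k=(a_k\pm2)/s$ the rotation $r_k=m_{k-1}m_{k+1}$ is unipotent, so a suitable prime-to-$p$ power $h_k$ has order $p$, fixes every point off those fibres, and acts freely on them, giving $|\mathcal{O}\cap(\text{parabolic }x_k\text{-slices})|\equiv 0\pmod p$. But this congruence says nothing about the residue of $|\mathcal{O}|$ itself: the points of $\mathcal{O}$ lying on none of the six parabolic slices are invisible to all three operators $h_k$, and no inclusion--exclusion among the slice counts can recover them. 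Even if you could prove your step (a) --- that every non-trivial orbit meets a parabolic slice, which is itself the hard connectivity input of \cite{BGS} and is only known for large $p$, not by elementary means --- you would conclude $|\mathcal{O}|\ge p$, a lower bound, not $p\mid|\mathcal{O}|$. The divisibility statement of Theorem~\ref{thm:congruence-lower-bound} is strictly stronger and requires a different mechanism. (Your reduction of the hypotheses to the singular locus is also not what the theorem needs: an extra node would indeed give a stray size-$1$ orbit, but absence of extra nodes does not by itself yield divisibility; the actual counterexamples in Section~\ref{sec:counter}, e.g.\ $a=(2,2,-2)$, have orbits of sizes $2,4,8,\dots$ coming from double fixed points on the degenerate conics $x_i=0$, not only from singular points.)

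The paper's proof (following Martin) runs along entirely different lines. One defines $\Delta_i(x)=\frac{x_i}{x_{i-1}x_{i+1}}+\frac12\bigl(\frac{a_{i-1}}{x_{i-1}}+\frac{a_{i+1}}{x_{i+1}}\bigr)$ for $x_1x_2x_3\neq0$, so that $\sum_i\Delta_i(x)=s$ by the surface equation and $\Delta_i(x)+\Delta_i(m_ix)=s$ by Vieta; summing over an orbit of size $V$ and pairing $x$ with $m_ix$ gives $sV=\tfrac32 sV$, hence $sV=0$ and $V\equiv0\pmod p$ since $s\neq0$. All of the work, and the precise role of the hypotheses $a_i^2\neq4$ and \eqref{eqn:hypo}, lies in extending $\Delta_{i\pm1}$ to the triples with $x_i=0$: these form dihedral cycles under $m_{i\pm1}$ on the line pair $x_{i+1}=r_ix_{i-1}$ with $r_i^2+a_ir_i+1=0$, and the extension is consistent precisely because $\sum_{\ell}\bigl(\Delta_i(m_{i-1}\rho^\ell x)+\Delta_i(\rho^\ell x)\bigr)=0$ around the cycle, which holds automatically when $a_i^2\neq4$ (a geometric series in $r_i^{-2}\neq1$) and holds when $a_i^2=4$ exactly under \eqref{eqn:hypo}. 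If you want to salvage your fibration picture, the degenerate fibres $x_i=0$ (not the parabolic ones) are where the action happens; but the counting argument itself must be replaced by the angle-sum identity.
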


The hypothesis (\ref{eqn:hypo}) means the parameters are, up to permutation, of the form $(2\sigma, \pa, \pa \sigma)$ for some $\pa \in \F_p$ and some $\sigma=\pm 1$.
This includes four special values
\begin{equation} \label{eqn:cayley-nodes}
(a_1,a_2,a_3)=(2,2,2), \quad (2,-2,-2), \quad (-2,2,-2), \quad (-2,-2,2)
\end{equation}
together with the generic case where $\pa^2 \neq 4$.
For the special values, we suspect that there are four orbits of sizes depending on $p \bmod 4$.
Their sizes can be written using the quadratic character $\chi$ modulo $p$ as follows:
\[
p^2=p \frac{p+3\chi(-1)}{4} +  p \frac{p-\chi(-1)}{4}+p \frac{p-\chi(-1)}{4}+p \frac{p-\chi(-1)}{4}
\]
where $p^2$ is the total number of non-zero solutions $x$, partitioned into one orbit of size $p(p\pm 3)/4$ and three orbits of size $p(p \mp 1)/4$.
For $(2\sigma,\pa, \pa \sigma)$ with $\pa^2 \neq 4$, there seem to be only two orbits:
\[
p^2+\chi(\pa^2-4)p=p\frac{p-\chi(\pa^2-4)}{2} + p \frac{p+3\chi(\pa^2-4)}{2}
\]
If $a_i^2 \neq 4$, for all $i$, then we believe there is only one orbit. Its size is
\begin{equation} \label{eqn:numel}
    p^2+ p\big(\sum_{i=1}^3 \chi(a_i^2-4)+C(a_1,a_2,a_3) \big)
\end{equation}
where $C: \F_p^3 \rightarrow \{0,1,-1\}$ is an extra $\pm 1$ present when $(a_1,a_2,a_3)$ lies on Cayley's cubic surface. Proposition~\ref{prop:numel} shows that, regardless of the orbit structure, (\ref{eqn:numel}) is the total number of solutions $x \neq (0,0,0)$ to (\ref{eqn:family}).

We can show that there are at least this many orbits because of a quadratic obstruction derived from equation (\ref{eqn:family}).
A similar obstruction for a related family of Markoff-type K3 surfaces was described by O'Dorney in terms of a double-cover of the surface~\cite{O}.

\begin{theorem} \label{thm:breakup}
    If $a_i=2\sigma, a_{i-1}=\pa,$ and $a_{i+1}=\pa\sigma$ where $\sigma=\pm1$, then (\ref{eqn:family}) has at least two orbits besides $(0,0,0)$.
    If $\pa=\pm 2$, i.e., the cases (\ref{eqn:cayley-nodes}), then (\ref{eqn:family}) has at least four orbits besides $(0,0,0)$.
\end{theorem}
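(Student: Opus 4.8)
The plan is to exhibit a non-trivial function on $\F_p^3$ (restricted to the surface) that is invariant under all three moves $m_i$ but not constant on the set of nonzero solutions, so that its level sets separate the orbits. The natural candidate comes directly from equation (\ref{eqn:family}): after completing the square in one variable, the discriminant of the quadratic (\ref{eqn:family}) in $x_i$ must be a square for an $\F_p$-point to exist, and in the special cases $a_i=2\sigma$ this discriminant factors. Concretely, I would substitute $a_i=2\sigma$, $a_{i-1}=\pa$, $a_{i+1}=\pa\sigma$ into (\ref{eqn:family}) and rewrite it, grouping the terms involving $x_i$; the relation (\ref{eqn:hypo}) is exactly what makes $x_i^2+a_ix_{i-1}x_{i+1}+\ldots$ combine into a perfect-square-friendly form, say $(x_i+\sigma x_{i-1}x_{i+1}-\ldots)^2$ plus a quantity $Q(x_{i-1},x_{i+1})$ that depends only on the other two coordinates and on $\pa,\sigma$. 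Since $m_i$ only changes $x_i$ and fixes $x_{i-1},x_{i+1}$, the quadratic character $\chi(Q)$ (or $\chi$ applied to the relevant discriminant) is automatically $m_i$-invariant; the work is to check it is also invariant under $m_{i-1}$ and $m_{i+1}$, or to symmetrise so that one gets a genuinely move-invariant quantity.

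Carrying this out: first I would identify the invariant explicitly. I expect it to be the value of the quadratic form obtained by the square-completion, up to squares — equivalently, writing the surface as a conic bundle over the $x_i$-line (or over a line through a suitable change of coordinates), the discriminant locus splits into two components because of the factorisation forced by $\pa^2=4$ or by the $(2\sigma,\pa,\pa\sigma)$ shape, and $x$ lands on one component or the other. Second, I would verify invariance under the other two Vieta moves by direct substitution, using $s=3+a_1+a_2+a_3$ and the special values; this is where the relation (\ref{eqn:hypo}) should do the heavy lifting, and I anticipate that the two "missing" moves permute the components in a way that still preserves $\chi$ of the invariant (possibly because the invariant itself is, up to a square factor coming from $x_{i-1}$ or $x_{i+1}$, unchanged). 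Third, having a well-defined locally constant function $\Phi$ on nonzero solutions taking (at least) two values, I conclude there are at least two orbits; to get four in the Cayley-node cases (\ref{eqn:cayley-nodes}), I would produce a second independent invariant by running the same square-completion in a different variable $x_j$, using that now all three parameters equal $\pm 2$ so the symmetry gives two (rather than one) independent $\chi$-valued invariants, whose joint level sets give four classes.

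The step I expect to be the main obstacle is proving that the candidate $\Phi$ genuinely takes more than one value on the nonzero solutions — i.e. that each component of the discriminant locus is actually populated by honest $\F_p$-points of (\ref{eqn:family}) with $x\neq(0,0,0)$, and that the fibres over it are nonempty. This is a point-counting statement: one must show that for generic values in each component of $Q$ there is a solution, which can be done by counting points on the relevant conic fibres (each smooth fibre has $p\pm1$ points, there are $O(1)$ degenerate fibres), and separately exhibit at least one explicit nonzero point in each class — for instance $(1,1,1)$ lies in one class, and a short search or a parametric family should pin down a point in the other. A secondary subtlety is ruling out that the "two" orbits predicted by $\Phi$ secretly coincide because one of them is forced to be empty for small $p$ or for degenerate $\pa$ (e.g. $\pa$ making $s=0$, which is excluded, or $\pa^2-4$ a square versus non-square affecting which case of Theorem~\ref{thm:breakup} applies); handling these edge values of $\pa$ cleanly, and making sure the count of populated classes is exactly the claimed two or four rather than merely "at least one", is the delicate part. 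The divisibility-by-$p$ input from Theorem~\ref{thm:congruence-lower-bound} is not needed for the lower bound on the number of orbits itself, but it is what guarantees these orbits are large, so I would note that the two statements together pin down the orbit sizes up to the ambiguity recorded in the displayed partitions.
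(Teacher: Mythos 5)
Your overall strategy---extract a quadratic-character-valued function that is invariant under all three moves and takes more than one value---is exactly the paper's strategy, but your concrete candidate does not work, and the two identities that make the paper's version go through are missing. You propose $\Phi=\chi(Q)$ where $Q(x_{i-1},x_{i+1})$ is the remainder after completing the square in $x_i$, i.e.\ (up to sign) the discriminant of (\ref{eqn:family}) as a quadratic in $x_i$. On the surface that discriminant equals $(x_i-x_i')^2$ with $x_i'\in\F_p$, so it is always a square: $\chi(Q)$ takes only the values $0$ and $\chi(-1)$ and separates nothing. The invariant the paper uses is $\chi(x_i)$ itself, and the roles of the moves are the opposite of what you assert: $\chi(x_i)$ is trivially preserved by $m_{i\pm1}$ (they do not touch $x_i$), and the content is its behaviour under $m_i$, which follows from Vieta's \emph{product} rule $x_ix_i'=x_{i-1}^2+x_{i+1}^2+a_ix_{i-1}x_{i+1}=(x_{i-1}+\sigma x_{i+1})^2$ when $a_i=2\sigma$, forcing $\chi(x_i)\chi(x_i')\neq-1$. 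The genuinely delicate point --- which you do not flag --- is not nonemptiness but well-definedness of the class where $\chi(x_i)=0$: on triples with $x_i=0$ the sign is ambiguous, and the paper resolves this by pairing $\chi(x_i)$ with a second quantity $L(x)=x_i+x_i'+2x_{i+1}+2\sigma x_{i-1}$ satisfying $\chi(x_i)\chi(L(x))\neq-1$ (a rewriting of (\ref{eqn:family}) as $(x_i+\sigma x_{i-1}+x_{i+1})^2=x_iL(x)$), and then proving the nontrivial identity (\ref{perfect-square}) showing $L(x)L(m_{i+1}x)$ is a perfect square, so that the common sign class is preserved by all three moves. Without an analogue of (\ref{perfect-square}) your argument cannot close, since whatever invariant you choose will degenerate on the locus where it vanishes and the moves $m_{i\pm1}$ act nontrivially there.

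Your treatment of the Cayley nodes is closer to the mark: with all $a_j=\pm2$ one does get independent character constraints from each coordinate, and the paper's version is that (\ref{eqn:family}) becomes $(x_1\pm x_2+x_3)^2=sx_1x_2x_3$, whence $\chi(x_1)\chi(x_2)\chi(x_3)\neq-\chi(s)$ and the moves preserve the four subsets where two of the characters have prescribed signs. Your final concern about showing each class is populated is legitimate (the paper is terse on this point too), but it is a secondary issue compared to identifying the correct invariant and proving its invariance under the moves that change the coordinates it depends on.
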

Not only do the methods of Section~\ref{section:quadobstruction} show that the graph splits into 2 or 4 pieces, they demonstrate that the pieces are of roughly the same size.
One can hope to prove that these do not split any further by adapting the methods of Bourgain--Gamburd--Sarnak \cite{BGS}. At present, we have verified by computer for every prime $5 \leq p < 200$ that the graph has one component in the generic case ($a_i^2\neq 4$ for all $i$) and that the special cases in Theorem~\ref{thm:breakup} do not split any further. We intend to pursue this in a follow-up paper.

If (\ref{eqn:hypo}) fails, then the orbit sizes are not necessarily divisible by $p$. For example, the singleton $(0,a-b,b-a)$ lies in its own orbit in the case of parameters $(2,a,b)$.
Likewise if $3+a_1+a_2+a_3 = 0$, then the orbits are not necessarily divisible by $p$. This already happens for the Markoff surface itself and $p=3$, where there is a single orbit of size 8 apart from $(0,0,0)$.
This example can be extended to all primes by taking $a_1=a_2=0$ and $a_3=-3$ for any $p$, which we discuss in Section~\ref{sec:counter}.
Proposition~\ref{prop:00-3} gives the number of orbits.

One could consider more generally
\[
x_1^2 + x_2^2 + x_3^2 + a_1x_2x_3+a_2x_1x_3+a_3x_1x_2 = sx_1x_2x_3
\]
where $s$ is not necessarily given by the sum (\ref{eqn:sum}).
However, over a field, this degree of freedom can be eliminated up to a scaling.
Changing $x \mapsto tx$ with $t \neq 0$ and then cancelling $t^2$ from both sides preserves $a_1$, $a_2$, $a_3$ while converting $s$ to $ts$.
Given any non-zero value of $3+a_1+a_2+a_3 \neq 0$, we may rescale to the situation (\ref{eqn:sum}).
The case $3+a_1+a_2+a_3=0$ should be considered separately.

\subsection{Relation to generalised cluster algebras}

The Markoff surface, where $a_1=a_2=a_3=0$ in (\ref{eqn:family}), has a rich connection to cluster algebras with achievements including proofs of Aigner's monotonicity conjectures \cite{LLRS, RS}. 

Non-zero parameters lead to a ``generalised cluster algebra'' instead, for which we refer to \cite{BG, BV, GM}.
Figure~\ref{fig:quivers} shows the quivers associated with Markoff type surfaces.
For the right quiver, which yields generalised cluster algebras, there is a polynomial $1+a_i z + z^2$ at each node, called the exchange polynomial.
The moves of the equation \eqref{eqn:family} come from mutations in the theory of generalised cluster algebras, which are described by the relations given by the exchange polynomials as follows:
\begin{equation}\label{eqn:move multiplicative}
  \begin{split}
    x_i' x_i &= x_{i-1}^2 + a_i x_{i-1} x_{i+1} + x_{i+1}^2 \\
    \big(&= x_{i+1}^2 (1 + a_i (x_{i-1} x_{i+1}^{-1}) + (x_{i-1} x_{i+1}^{-1})^2) \big)
  \end{split}
\end{equation}
where $x_i'$ is the new variable obtained by the mutation at $x_i$.
This relation can be interpreted as a quadratic equation jointly in $x_i'$ and $x_i$ with coefficients depending on $x_{i-1}$ and $x_{i+1}$.

The exchange polynomial plays a role in the proof of Theorem~\ref{thm:congruence-lower-bound}.
Triples with $x_i=0$ form cycles of length given by the order of this polynomial's roots in $\F_p^{\times}$ or $\F_{p^2}^{\times}$.

\tikzset{
arrow/.style={postaction={decorate,decoration={
  markings,
  mark=at position .55 with {\arrow[#1]{To[length=1.1mm]}}
}}},
double arrow/.style={postaction={decorate,decoration={
  markings,
  mark=at position .50 with {\arrow[#1]{To[length=1.1mm]}},
  mark=at position .60 with {\arrow[#1]{To[length=1.1mm]}}
}}},}
\begin{figure}
\begin{tikzpicture}[scale=1.5]
\pgfmathsetmacro{\r}{0.75}
\draw (0,0)++(90:1) node(1){$f$};
\draw (0,0)++(210:1) node(2){$f$};
\draw (0,0)++(-30:1) node(3){$f$};
\draw [double arrow] (1)--(2);
\draw [double arrow] (2)--(3);
\draw [double arrow] (3)--(1);
\draw (0,0)++(-90:1.5*\r) node{$f(z)=1+z$};
\end{tikzpicture}
\hspace{1cm}
\begin{tikzpicture}[scale=1.5]
\pgfmathsetmacro{\r}{0.75}
\draw (0,0)++(90:1) node(1){$f_1$};
\draw (0,0)++(210:1) node(2){$f_2$};
\draw (0,0)++(-30:1) node(3){$f_3$};
\draw [arrow] (1)--(2);
\draw [arrow] (2)--(3);
\draw [arrow] (3)--(1);
\draw (0,0)++(-90:1.5*\r) node{$f_i(z)=1+a_iz+z^2$};
\end{tikzpicture}
\caption{Left: the quiver for the cluster algebra associated with the classical Markoff equation. 
Right: the quiver for the generalised cluster algebra associated with the generalised Markoff equation 
with parameters $a_1$, $a_2$, $a_3$.
The $f$ and $f_i$ are called exchange polynomials at the corresponding vertices.
}
\label{fig:quivers}
\end{figure}
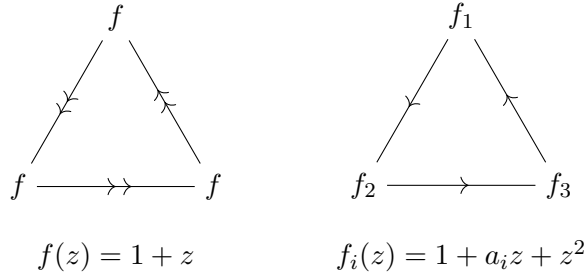

\subsection{Interpretation as a character variety}

Beyond rescaling, a certain affine transformation can be used to rewrite (\ref{eqn:family}).
Define new variables $u=(u_1,u_2,u_3)$ by
\begin{equation} \label{eqn:ui}
u_i = sx_i -a_i
\end{equation}
Then (\ref{eqn:family}) becomes
\begin{equation} \label{eqn:linear-terms}
\sum_i \big(u_i^2 + (2a_i + a_{i-1}a_{i+1}) u_i \big) = u_1u_2u_3 - 2a_1a_2a_3-a_1^2-a_2^2-a_3^2
\end{equation}
where the lower-order terms are quadratic in $a$ and linear in $u$, reversing the roles compared to $a_i x_j x_k$ from (\ref{eqn:family}).
The linear version (\ref{eqn:linear-terms}) has a known interpretation as the character variety of a four-holed sphere. 
See \cite{benny-goldman,Magnus,MPT} and the classical works of Fricke and Vogt \cite{Vogt}.
See also \cite{Cantat, Cantat-Loray} and the recent preprint \cite{CDMB}.
In this interpretation, one can think of
\[
u_1=-\tr(AB), \quad u_2=-\tr(BC), \quad u_3=-\tr(CA)
\]
where $A$, $B$, $C$ are matrices in $\SL_2(\F_p)$ such that
\[
a_1=\tr(A), \quad a_2=\tr(B), \quad a_3=\tr(C), \quad \tr(ABC)=2.
\]

One can change $u_i$ to
\begin{equation} \label{eqn:miu}
\widetilde{u}_i = -u_i + u_{i-1}u_{i+1} - 2a_i - a_{i-1}a_{i+1}
\end{equation}
which gives another solution to (\ref{eqn:linear-terms}), keeping $u_{i \pm 1}$ the same.
These moves commute with the change of variable, that is,
\begin{equation} \label{eqn:equivariant}
\widetilde{u}_i = sx_i' - a_i
\end{equation}
Indeed, substituting $u=sx-a$ into (\ref{eqn:move}), we find
\begin{align*}
sx_i'-a_i = -u_i-2a_i +(u_{i-1}+a_{i-1})(u_{i+1}+a_{i+1}) \\-a_{i-1}(u_{i+1}+a_{i+1})
-a_{i+1}(u_{i-1}+a_{i-1})
\end{align*}
which simplifies to (\ref{eqn:miu}).

The change of variables \eqref{eqn:miu} appears in \cite[Section 6]{CMR}, where they use it to study the relation between the character variety equation \eqref{eqn:linear-terms} and the generalised cluster algebra associated with the right quiver in Figure~\ref{fig:quivers}.
See also \cite{GMS, Hikami} for the relation between the character variety equation and the (generalised) cluster algebra.

\subsection{Graphs}

Orbits can be thought of as connected components in a graph where a vertex $x=(x_1,x_2,x_3)$ is adjacent to $m_i x$ for each of the moves (\ref{eqn:move}) for $i=1,2,3$.
The following proposition shows that there are no bigons in these graphs.
If there are two different edges between vertices $x$ and $y$, then in fact $x=y$ is a fixed point for both the corresponding moves.
This fact is important for the proof of Theorem~\ref{thm:congruence-lower-bound} in the case of parameters of the form $a=(2\sigma,\pa,\pa\sigma)$.
However, for the statement, one can take any triples $(x_1,x_2,x_3)$ in $\F_p^3$ as vertices, regardless of whether they lie on the surface (\ref{eqn:family}).
\begin{proposition}\label{prop:no-bigons}
If $m_ix = m_j y$ where $i \neq j$ and $x,y \in \F_p^3$, then $x=y$.
\end{proposition}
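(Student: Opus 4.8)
The plan is to prove this from the single observation that distinct moves rewrite distinct coordinates. The hypothesis records that $x$ and $y$ are joined by two edges of the graph, say of colours $i$ and $j$ with $i\neq j$; since each $m_\ell$ is an involution — applying (\ref{eqn:move}) twice sends $x_\ell\mapsto -(-x_\ell+c)+c=x_\ell$, where $c$ is the part of (\ref{eqn:move}) not containing $x_\ell$ — both edges may be oriented away from $x$, so the hypothesis amounts to $m_ix=y=m_jx$. It therefore suffices to show that $m_ix=m_jx$ with $i\neq j$ forces $m_ix=x$; granting this, $y=m_ix=x$, and by the same token $m_jx=x$, so $x=y$ and the common vertex is fixed by both moves, as the informal statement records.

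To see that $m_ix=m_jx$ (with $i\neq j$) implies $m_ix=x$, I would write $k$ for the third index and compare the two triples coordinate by coordinate. The $k$-th coordinate is left alone by both moves, so there both triples read $x_k$ and the equality is automatic. In the $i$-th coordinate, $m_j$ changes nothing, so $(m_jx)_i=x_i$; hence $(m_ix)_i=x_i$, and since $m_i$ affects no other coordinate this says exactly $m_ix=x$. The $j$-th coordinate is symmetric and yields $m_jx=x$. No hypothesis on $p$, on the parameters $a_\ell$, or on whether $x,y$ satisfy (\ref{eqn:family}) enters, which matches the remark that one may take arbitrary triples in $\F_p^3$ as vertices.

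I do not expect a genuine obstacle: the heart of the matter is the one-line coordinate comparison above. The two places calling for a little care are the reduction step, which really does use that every $m_\ell$ is an involution — so that ``two edges between $x$ and $y$'' is the same as ``$x$ reaches $y$ by two different moves'' — and the index bookkeeping in (\ref{eqn:move}) under the cyclic convention $\ell\mapsto\ell\pm1$, needed to be sure that $m_i$ and $m_j$ genuinely share a fixed coordinate and that $(m_jx)_i=x_i$.
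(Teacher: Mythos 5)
Your second paragraph --- the coordinate-by-coordinate comparison --- is correct and is essentially the paper's own argument: each move touches only one coordinate, so if $m_ix=m_jx$ with $i\neq j$, then comparing the $i$-th coordinates (which $m_j$ leaves alone) forces $(m_ix)_i=x_i$, hence $m_ix=x$, and symmetrically $m_jx=x$. That is exactly the statement the paper needs later (it is what makes ``(3) implies (9)'' work in Lemma~\ref{lemma:N=1 iff a_i^2=4}), and your closing remark that the common vertex is a double fixed point matches the paper's informal gloss on the proposition.

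The problem is your first paragraph. From the literal hypothesis $m_ix=m_jy$ you cannot deduce $m_ix=y=m_jx$: the involution property lets you rewrite $z=m_ix$ as $x=m_iz$, but it gives you no way to transfer the $m_j$ from $y$ onto $x$. What $m_ix=m_jy$ actually encodes is a common neighbour $z$ of $x$ and $y$, reached by an $i$-coloured edge from $x$ and a $j$-coloured edge from $y$ --- a path of length two, not a bigon --- and the literal statement is in fact false: in the Markoff case $a=(0,0,0)$, $s=3$, one has $m_1(1,1,1)=(2,1,1)=m_2(2,5,1)$ while $(1,1,1)\neq(2,5,1)$, and both triples even lie on the surface. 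To be fair, the paper's own proof makes the same silent substitution, replacing the hypothesis by ``$m_1x=y$ and $m_2x=y$''; the proposition should therefore be read as the bigon statement $m_ix=m_jx\Rightarrow m_ix=x=m_jx$, which is precisely what your second paragraph establishes. But the specific claim that the reduction ``really does use that every $m_\ell$ is an involution'' is the step that fails if the hypothesis is taken at face value, so you should either restate the hypothesis as $m_ix=m_jx$ (two coincident edges at a single vertex $x$) or drop the reduction and prove the bigon form directly.
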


\begin{proof}
Suppose $i=1$ and $j=2$.
Since each move changes only one coordinate, from $m_1x=y$ it follows that $x$ and $y$ agree in the second and third coordinates.
Similarly from $m_2x=y$, they agree in the first and third coordinates.
Thus $x$ and $y$ agree in all coordinates.
\end{proof}

\section*{Acknowledgments}
The authors thank Robert Osburn, who suggested using cluster algebras to generalize \cite{BGS}, which served as the starting point for this project.
MdCI and ML are thankful to have participated in the workshop
\href{https://www.birs.ca/events/2025/5-day-workshops/25w5411}{\textit{Perspectives on Markov Numbers}}
at the Banff International Research Station,
where Martin's method was first presented.

ML and YM were partially supported by the Irish Research Council Advanced Laureate Award IRCLA/2023/1934.
MdCI was supported by the Knut and Alice Wallenberg Foundation (Wallenberg Initiative for Networks and Quantum Information) and by Stiftelsen Hierta Retzius through the Royal Swedish Academy of Sciences.

\section{Divisibility of orbit sizes}

In this section, we prove the main result Theorem~\ref{thm:congruence-lower-bound}. The argument is closely modelled on Martin's proof from \cite{Martin}.

For simplicity, let us first give a proof assuming that $x_i \neq 0$ for all $(x_1,x_2,x_3)$ in a given orbit. It is also helpful to imagine that $m_i x \neq x$ for all $x$ in the orbit.
As we will argue later, these assumptions can be removed as long as $a_i^2 \neq 4$ as well as in the cases where $a_i^2=4$ and (\ref{eqn:hypo}) holds.
However, the argument is easier to write with them in mind.

It is also worth noting that, if $a_i^2-4$ is not a square in $\F_p$, then $(0,0,0)$ is the only solution with $x_i=0$.
Thus the simple version of the proof assuming $x_i \neq 0$ already gives many cases of the result.

\begin{proof}
With $s=3+a_1+a_2+a_3$ and taking the indices cyclically, we can write (\ref{eqn:family}) as
\[
x_1^2+x_2^2+x_3^2+ \sum_{i \bmod 3} a_i x_{i-1}x_{i+1} = s x_1x_2x_3
\]
If no $x_i$ vanishes, then dividing both sides by $x_1x_2x_3$ gives
\begin{equation} \label{eqn:predelta}
\sum_{i \bmod 3} \big( \frac{x_i}{x_{i-1}x_{i+1}} + \frac{a_i}{x_i} \Big)= s.
\end{equation}
We have
\[
\sum_{i \bmod 3} \frac{a_i}{x_i} = \frac{1}{2} \sum_{j \bmod 3} \Big( \frac{a_{j-1}}{x_{j-1}} + \frac{a_{j+1}}{x_{j+1}} \Big)
\]
since the second sum $\sum_{j \bmod 3}$ amounts to summing $a_i/x_i$ with each index counted twice (once as $i-1+1$ and again as $i+1-1$).

This motivates the definition, for $x_1x_2x_3 \neq 0$, of three functions
\begin{equation} \label{eqn:delta}
\Delta_i(x) \coloneqq \frac{x_i}{x_{i-1}x_{i+1}} + \frac{1}{2} \Big( \frac{a_{i-1}}{x_{i-1}}+\frac{a_{i+1}}{x_{i+1}} \Big).
\end{equation}
From (\ref{eqn:family}), or rather (\ref{eqn:predelta}), they satisfy
\begin{equation} \label{eqn:delta-total}
\sum_{i \bmod 3} \Delta_i(x) = s.
\end{equation}
From (\ref{eqn:move}), it follows that
\begin{equation} \label{eqn:delta-pair}
\Delta_i(x) + \Delta_i(m_ix) = s
\end{equation}
for each index $i$.
If $x=m_i x$ is fixed by a move, then for that index
\begin{equation} \label{eqn:delta-fix}
\Delta_i(x) = \frac{s}{2}.
\end{equation}

If a particular coordinate vanishes, say $x_i = 0$, then (\ref{eqn:delta}) can be taken as a definition of the corresponding $\Delta_i(x)$.
The same formula for the other two $\Delta_{i \pm 1}(x)$ would involve division by $0$. However, as we argue in the next sections, the definition can be extended so that (\ref{eqn:delta-total}) and (\ref{eqn:delta-pair}) continue to hold.
See %(\ref{eqn:plus-from-minus}), 
\eqref{even} and \eqref{odd} for a suitable extension.

Assuming this extension is possible, here is how to prove Theorem~\ref{thm:congruence-lower-bound}.
Let $V$ be the number of points $(x_1,x_2,x_3)$ in an orbit $O$.
By (\ref{eqn:delta-total}),
\[
sV = \sum_{x \in O} s = \sum_{x \in O} \sum_{i \bmod 3} \Delta_i(x) = \sum_{i} \sum_{x \in O} \Delta_i(x).
\]
Since $O$ is an orbit, we have $m_i x \in O$ for each $x \in O$.
It is a union of pairs $\{x,m_i x\}$ together, perhaps, with some singletons $x=m_i x$.
By (\ref{eqn:delta-pair}) and (\ref{eqn:delta-fix}),
\[
\sum_{x \in O} \Delta_i(x) = \sum_{\{x,m_i x\}} s + \sum_{x = m_i x} \frac{s}{2} = \frac{sV}{2}.
\]
Combining these steps, we get the same sum three times
\[
sV = \sum_{x \in O} s = \sum_{x \in O} \sum_{i \bmod 3} \Delta_i(x) = \sum_{i} \frac{sV}{2} = \frac{3sV}{2}
\]
and so $sV=0$. This implies $V = 0$ as we have assumed $s \neq 0$.
\end{proof}

\subsection{Vanishing coordinates}

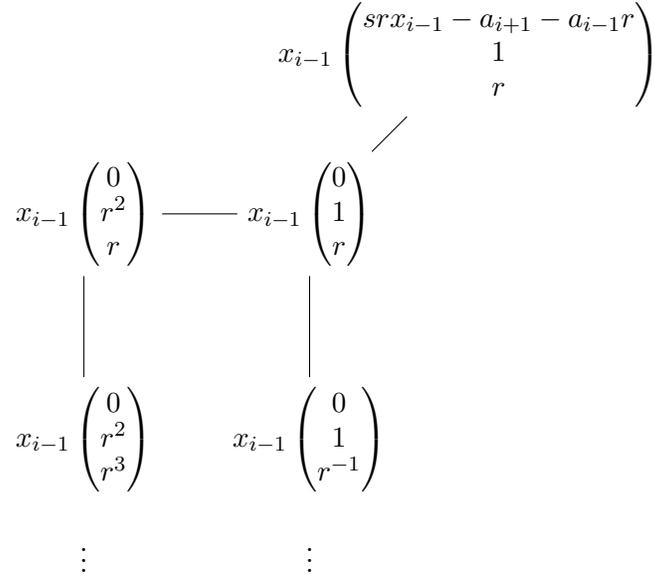
\begin{figure}
\begin{tikzpicture}[scale=3]
\draw (0,0) node(0){$x_{i-1}\begin{pmatrix} 0 \\ 1 \\ r \end{pmatrix}$};
\draw (0,-1) node(1){$x_{i-1}\begin{pmatrix} 0 \\ 1 \\ r^{-1}\end{pmatrix}$};
\draw (-1,0) node(2){$x_{i-1}\begin{pmatrix} 0 \\ r^2 \\ r\end{pmatrix}$};
\draw (-1,-1) node(3){$x_{i-1}\begin{pmatrix} 0 \\ r^2 \\ r^3\end{pmatrix}$};
\draw (0)++(45:1) node(4){$x_{i-1}\begin{pmatrix} srx_{i-1}-a_{i+1}-a_{i-1}r \\ 1 \\ r \end{pmatrix}$};
\draw (1)--(0)--(2)--(3);
\draw (0)--(4);
\draw (3)++(270:0.5) node{$\vdots$};
\draw (1)++(270:0.5) node{$\vdots$};
\end{tikzpicture}
\caption{
The action of $m_{i-1}$ and $m_{i+1}$ on triples with $x_i=0$.
For each value of $x_{i-1}$, they form a cycle whose length depends on the order of a solution to $r^2+a_i r + 1 = 0$.
If $a_i=0$, then the cycle has length 4.
}
\label{fig:xi0}
\end{figure}

In order to define $\Delta_{i\pm 1}(x)$ for triples with $x_i=0$, it is useful to describe these triples in more detail.
Throughout this section, we let $x$ be a point with $x_i=0$ for some $i$.
First note that if two coordinates vanish, then the third must also vanish by (\ref{eqn:family}).
Excluding $(0,0,0)$, only a single coordinate can vanish.
If $x_i= 0$, then (\ref{eqn:family}) simplifies to
\begin{equation} \label{eqn:xi0}
x_{i-1}^2 + x_{i+1}^2 + a_ix_{i-1}x_{i+1} = 0.
\end{equation}
Assuming $a_i^2-4 \neq 0$, there are two roots
\begin{equation} \label{eqn:ri}
r_i = \frac{-a_i + \sqrt{a_i^2-4}}{2}
\end{equation}
either of which can be chosen when solving (\ref{eqn:xi0}) for
\[
x_{i+1} = r_i x_{i-1}.
\]
The quadratic satisfied by $r_i$ is
\begin{equation} \label{eqn:ri-implicit}
r_i^2 + a_i r_i + 1 = 0
\end{equation}
which may be interesting to note in connection with the generalised cluster algebra underlying (\ref{eqn:family}).
It follows that $r_i \neq 0$ and the other solution of the quadratic is $r_i^{-1}$.

The conic $x_i=0$ therefore degenerates to a pair of lines meeting at $(0,0,0)$. Excluding the origin, there are $2(p-1)$ non-zero solutions.
They can be grouped into cycles under the action of $m_{i-1}$ and $m_{i+1}$.
These cycles are related to each other by scaling: since (\ref{eqn:xi0}) is homogeneous, we may for instance assume $x_{i-1}=1$ or use $x_{i-1}$ to parametrise the lines.
Each move exchanges the two lines, that is, changes $r_i$ to $r_i^{-1}$.
A double move $m_{i-1} \circ m_{i+1}$ scales the parametrisation of each line by $r_i^2$.

The moves $m_{i \pm 1}$ generate a dihedral group acting on the conic $\{x_i=0\}$, with cyclic subgroup generated by
\begin{equation} \label{eqn:rhotation}
\rho = m_{i+1}m_{i-1}.
\end{equation}
Indeed one can check the dihedral relation
\[
m_{i-1}\rho = m_{i-1}m_{i+1}m_{i-1} = \rho^{-1}m_{i-1}.
\]
We have singled out $m_{i-1}$ in this description. The other move is given by
\[
m_{i+1} = m_{i-1} \rho^{-1} = m_{i-1} \rho^{N-1}
\]
if $\rho$ has order $N$. Indeed, since the moves are involutions,
\[
m_{i-1} \rho^{-1} = m_{i-1} m_{i-1} m_{i+1} = m_{i+1}.
\]

The order $N$ is some divisor of $p-\chi(a_i^2-4)$, that is, $p-1$ if $r_i \in \F_p$ or $p+1$ if $r_i$ lies in a quadratic extension. We assume $\chi(a_i^2-4)=1$ or else $(0,0,0)$ is the only triple with $x_i=0$.
The pair of lines where $x_i=0$ consists of $2(p-1)$ points divided into $\frac{p-1}{N}$ cycles.

The cycle is then of length $2N$, but to see the dihedral symmetry it may be better to visualise an $N$-sided polygon with $\rho^k x$ at the corners for $k=0,\ldots,N-1$. 
The other points $m_{i-1}\rho^k x$ can be thought of as midpoints of the edges of the polygon, or as a second $N$-gon.

However, the case $N=1$ is special in various ways. The polygon degenerates to a single vertex with two self-edges. We record a lemma stating how this occurs.
\begin{lemma}\label{lemma:N=1 iff a_i^2=4}
  Assume that $x \neq (0,0,0)$ and $x_i = 0$.
  The following are equivalent:
  \begin{alignat*}{3}
      &(1) \ a_i^2 = 4, \qquad
      &&(4) \ r_i = -\frac{a_i}{2},\qquad
      &&(7) \ x_{i-1}^2 = x_{i+1}^2, \\
      &(2) \ r_i^2 = 1, \quad
      &&(5) \ r_i^{-1} = -\frac{a_i}{2}, \quad %\label{item:x_i=0, m_{i+1}fix} 
      &&(8) \ m_{i-1} x = x, \\
      &(3) \ N = 1, \quad
      &&(6) \ x_{i-1} + \frac{a_i}{2} x_{i+1} = 0, \qquad %\label{item:x_i=0, m_{i-1}fix},
      &&(9) \ m_{i+1} x = x. \\
  \end{alignat*}
\end{lemma}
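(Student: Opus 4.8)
The plan is to show all nine conditions equivalent by linking them with a handful of short implications, organised into three clusters: (1), (2), (4), (5) are purely about the parameter $a_i$ and its root; (6) and (7) are conditions on the coordinates of $x$; and (3), (8), (9) are dynamical. First I would fix the standing data. Since $x \neq (0,0,0)$ but $x_i = 0$, equation (\ref{eqn:xi0}) forces both $x_{i-1} \neq 0$ and $x_{i+1} \neq 0$: if one of them vanished, (\ref{eqn:xi0}) would make the other vanish too, giving $x=(0,0,0)$. Hence $r_i = x_{i+1}/x_{i-1}$ lies in $\F_p^{\times}$, by (\ref{eqn:ri-implicit}) it satisfies $r_i^2 + a_i r_i + 1 = 0$, and its companion root is $r_i^{-1} = x_{i-1}/x_{i+1}$; in particular $r_i + r_i^{-1} = -a_i$.

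For the algebraic cluster: $r_i^2 = 1$ means $r_i \in \{1,-1\}$, and substituting $r_i = \pm 1$ into $r_i^2 + a_i r_i + 1 = 0$ gives $a_i = \mp 2$, hence $a_i^2 = 4$; conversely, if $a_i^2 = 4$ then the discriminant $a_i^2 - 4$ vanishes, so $r_i = -a_i/2$ and $r_i^2 = a_i^2/4 = 1$. This yields $(1)\Leftrightarrow(2)\Leftrightarrow(4)$. Since $r_i^{-1}$ is also a root of $t^2 + a_i t + 1$, condition (5) says that $-a_i/2$ is a root of this quadratic, i.e. $1 - a_i^2/4 = 0$, which is (1) again; so $(5)\Leftrightarrow(1)$.

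For the remaining equivalences I would divide and substitute. Dividing (7) by $x_{i-1}^2$ turns it into $r_i^2 = 1$, so $(7)\Leftrightarrow(2)$; dividing (6) by $x_{i+1}$ turns it into $x_{i-1}/x_{i+1} = -a_i/2$, i.e. $r_i^{-1} = -a_i/2$, so $(6)\Leftrightarrow(5)$. For the fixed-point conditions, substitute $x_i = 0$ into the move (\ref{eqn:move}): one finds that $m_{i-1}$ sends $x_{i-1} \mapsto -x_{i-1} - a_i x_{i+1}$ and leaves the other two coordinates fixed, so $m_{i-1}x = x$ iff $2x_{i-1} + a_i x_{i+1} = 0$, which is (6) after dividing by $2$ (legitimate since $p$ is odd); this is $(8)\Leftrightarrow(6)$. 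Symmetrically $m_{i+1}$ sends $x_{i+1} \mapsto -x_{i+1} - a_i x_{i-1}$, so $m_{i+1}x = x$ iff $2x_{i+1} + a_i x_{i-1} = 0$ iff $r_i = -a_i/2$, which is $(9)\Leftrightarrow(4)$. Finally $(3)\Leftrightarrow(2)$ is exactly what the discussion preceding the lemma provides: $\rho = m_{i+1}m_{i-1}$ rescales the parametrisation of each line of the degenerate conic $\{x_i=0\}$ by the factor $r_i^2$, so $N = \ord(r_i^2)$ in $\F_p^{\times}$, and this equals $1$ precisely when $r_i^2 = 1$.

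None of these steps is long; the only places requiring care are the index bookkeeping when substituting $x_i=0$ into (\ref{eqn:move}) — tracking which of $a_{i-1}, a_{i+1}$ multiplies which surviving coordinate — and the (harmless) division by $2$, valid because $p \geq 5$. The single ingredient that is not a one-line computation, the identification $N = \ord(r_i^2)$, has already been established in the text above and can simply be cited.
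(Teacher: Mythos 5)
Your proof is correct and follows essentially the same route as the paper's: the same quadratic $r_i^2+a_ir_i+1=0$ governing the cluster $(1),(2),(4),(5)$, the identification $N=\ord(r_i^2)$ for $(3)$, and the explicit action of $m_{i\pm1}$ on $\{x_i=0\}$ for $(6)$--$(9)$. The only organisational difference is that you close the loop on $(8)$ and $(9)$ by computing both fixed-point equations directly, obtaining $(8)\Leftrightarrow(6)$ and $(9)\Leftrightarrow(4)$, whereas the paper derives $(3)\Rightarrow(9)$ via Proposition~\ref{prop:no-bigons}; both arguments are valid.
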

\begin{proof}
    The equivalence of (1), (2), (4), and (5) follows from the equation \eqref{eqn:ri}.
    Since $N$ is the order of $r_i^2$, we also have (2) if and only if (3).
    By completing the square for the equation \eqref{eqn:xi0}, we have (1) if and only if (6).
    By the equation \eqref{eqn:xi0}, we also have (6) if and only if (7).
    Note that $m_{i-1}$ acts by
    \[
    x_{i-1} \mapsto -x_{i-1}-a_ix_{i+1}
    \]
    so the fixed point equation in (8) implies (6).
    By Proposition~\ref{prop:no-bigons}, we have (3) implies (9).
    Thus, we see that (8) implies (9).
    The same argument for $m_{i+1}$ shows that (9) implies (8).
\end{proof}

\begin{proposition}\label{prop:cycle nondegenerate}
  Assume that $x \neq (0,0,0)$, $x_i = 0$, and $N \geq 2$.
  The $2N$ cycle is non-degenerate, that is, the $2N$ points 
  \begin{equation}\label{eq:cycle points}
    \begin{split}
    &x, \rho x, \ldots, \rho^{N-1} x, \\
    &m_{i-1} x, m_{i-1} \rho x, \ldots, m_{i-1} \rho^{N-1} x
    \end{split}
  \end{equation}
  are distinct.
\end{proposition}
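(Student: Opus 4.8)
The plan is to reduce everything to the explicit dihedral action on the conic $\{x_i=0\}$ described above. Since $x\neq(0,0,0)$ and only one coordinate can vanish, we have $x_{i-1}\neq 0$, so $x$ lies on one of the two lines making up $\{x_i=0\}$; call it $L$, put $t_0=x_{i-1}\neq 0$ for its parameter, and let $r=r_i$ be the corresponding root of \eqref{eqn:ri-implicit}, so that points of $L$ satisfy $x_{i+1}=rx_{i-1}$ and the other line $L'$ satisfies $x_{i+1}=r^{-1}x_{i-1}$. As recorded in the discussion preceding \eqref{eqn:rhotation}, the moves $m_{i-1}$ and $m_{i+1}$ each interchange $L$ and $L'$, while $\rho=m_{i+1}m_{i-1}$ preserves each line and acts there by multiplying the parameter $x_{i-1}$ by a fixed scalar $\lambda$ (one of $r^{\pm 2}$) of multiplicative order $N$.

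First I would observe that the hypothesis $N\geq 2$ is precisely the non-degeneracy $L\neq L'$: since $N$ is the order of $r^2$ in $\F_p^\times$, $N\geq 2$ means $r^2\neq 1$, hence $r\neq r^{-1}$, so $L$ and $L'$ are distinct lines through the origin in the plane $\{x_i=0\}$ and therefore meet only at $(0,0,0)$.

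Then I would enumerate the points. For $0\leq k\leq N-1$ the point $\rho^kx$ lies on $L$ and has $x_{i-1}$-coordinate $\lambda^k t_0$; since $\lambda$ has order $N$ and $t_0\neq 0$, these $N$ coordinates are pairwise distinct, so $x,\rho x,\dots,\rho^{N-1}x$ are pairwise distinct and all lie on $L$. Applying the involution $m_{i-1}$, which is a bijection of the conic, the points $m_{i-1}x,m_{i-1}\rho x,\dots,m_{i-1}\rho^{N-1}x$ are likewise pairwise distinct, and they all lie on $L'$. None of these $2N$ points equals $(0,0,0)$, since the moves fix the origin and hence permute the nonzero points of the conic among themselves; consequently a point of the first family coinciding with a point of the second would force a common point of $L$ and $L'$ other than the origin, which is impossible. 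Hence all $2N$ points in \eqref{eq:cycle points} are distinct.

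Equivalently, this is an orbit--stabiliser computation for the dihedral group $\langle m_{i-1},m_{i+1}\rangle$ of order $2N$ acting on the conic: no nontrivial power of $\rho$ fixes $x$ (its scalar $\lambda$ has order $N$ and $t_0\neq 0$), and no element $m_{i-1}\rho^k$ fixes $x$ (it moves $x$ from $L$ to $L'\neq L$), so the stabiliser of $x$ is trivial and its orbit has size $2N$. The only places needing care are the bookkeeping of how $m_{i\pm1}$ act on the line parameter — already carried out in the text above — and the identification of $N\geq 2$ with $L\neq L'$; beyond these the argument is routine, so I do not expect a substantial obstacle.
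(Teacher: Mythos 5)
Your proof is correct, but it runs along a somewhat different track from the paper's. The paper argues by contradiction: a coincidence among the $2N$ points is conjugated, using the dihedral relation $m_{i-1}\rho=\rho^{-1}m_{i-1}$ and a parity split on the exponent, into a fixed point of $m_{i-1}$ or of $m_{i+1}$ on the conic $\{x_i=0\}$, which by Lemma~\ref{lemma:N=1 iff a_i^2=4} forces $a_i^2=4$ and hence $N=1$, contradicting $N\geq 2$. You instead verify distinctness directly from the explicit description of the action: the points $\rho^k x$ all lie on one line $L$ with parameters $\lambda^k t_0$ for a scalar $\lambda=r^{\pm 2}$ of exact order $N$, so they are pairwise distinct; the points $m_{i-1}\rho^k x$ are their images under an involution and lie on the other line $L'$; and $N\geq 2$ is exactly the condition $r^2\neq 1$, i.e.\ $L\neq L'$, so the two lines meet only at the origin, which is not in the orbit. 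All the ingredients you rely on (the scaling of the parameter by $r^{\pm2}$ under $\rho$, the swapping of the two lines by $m_{i\pm1}$, and the fact that a nonzero point with $x_i=0$ forces $r\in\F_p^\times$) are established in the surrounding text, so there is no gap. What your route buys is a self-contained, computation-free-of-case-analysis argument that bypasses the fixed-point equivalences of Lemma~\ref{lemma:N=1 iff a_i^2=4} for the cross-family comparison; what the paper's route buys is that it works purely group-theoretically from the dihedral relations plus the single fact that reflections have no fixed points when $a_i^2\neq 4$, without re-deriving the line parametrisation inside the proof.
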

\begin{proof}
  We first note that we have $a_i^2 \neq 4$ by Lemma~\ref{lemma:N=1 iff a_i^2=4}.
  Suppose contrarily that $x$ is equal to another point $y$ in the cycle.
  Since $N \geq 2$, the $y$ cannot be $\rho^{l} x$ for some $1 \leq l \leq N-1$.
  Thus we can assume that $y = m_{i-1} \rho^l x$ for some $0 \leq l \leq N-1$.
  We consider two cases where $x = m_{i-1} \rho^{2 l} x$ or $x = m_{i-1} \rho^{2 l + 1} x$.
  In the first case, we have $\rho^l x = m_{i-1} \rho^l x$ by applying $\rho^l$ to $x = m_{i-1} \rho^{2 l} x=\rho^{-\ell}m_{i-1}\rho^l x$, using $m_{i-1}\rho=\rho^{-1}m_{i-1}$. This implies $a_i^2 = 4$ by Lemma~\ref{lemma:N=1 iff a_i^2=4}.
  In the second case, we similarly have $m_{i-1}\rho^{2\ell+1}x=m_{i+1}\rho^{2\ell+2}x$ and $m_{i+1} \rho^{l+1} x = \rho^{l+1} x$, which also implies $a_i^2 = 4$ by Lemma~\ref{lemma:N=1 iff a_i^2=4}.
  In both cases, we arrive at a contradiction.
\end{proof}

\subsection{Completing the proof}

It remains to show that $\Delta_{i\pm 1}(x)$ can be defined even when $x_i = 0$ so that (\ref{eqn:delta-total}) and (\ref{eqn:delta-pair}) hold.
In fact, there is a degree of freedom in doing so.
If $N \geq 2$, then any starting value $\Delta_{i-1}(x)$ can be propagated around the cycle by imposing (\ref{eqn:delta-total}) and (\ref{eqn:delta-pair}).
If $N=1$ there is no choice, and for some parameters, no solution at all.

Consider a triple $x$ where $x_i=0$. We will extend $\Delta_{i-1}$ to the orbit of $x$ under $m_{i+1}$ and $m_{i-1}$, then define $\Delta_{i+1}$ so that (\ref{eqn:delta-total}) holds.
To start, choose any value $\delta \in \F_p$ and define $\Delta_{i-1}(x) = \delta$.
For $n \geq 0$, define
\begin{align}
\Delta_{i-1}(\rho^n x) &= \Delta_{i-1}(x) - \sum_{\ell=0}^{n-1} \Big(\Delta_i (m_{i-1}\rho^{\ell}x) + \Delta_i (\rho^{\ell+1} x) \Big) \label{even} \\
\Delta_{i-1}(m_{i-1}\rho^n x) &= s-\Delta_{i-1}(x) + \sum_{\ell=0}^{n-1} \Big(\Delta_i (m_{i-1}\rho^{\ell}x) + \Delta_i (\rho^{\ell+1} x) \Big) \label{odd}
\end{align}
where, for $n=0$, the empty sums on the right are interpreted as $0$.
This procedure might seem to give two values for $\Delta_{i-1}$ at $x = \rho^N x$ where $N$ is the order of $\rho$. However, they agree because of the following fact.

\begin{proposition} \label{prop:delta-cycle}
Let $m_{i \pm 1}$ be the moves (\ref{eqn:move}) acting on solutions $x \neq (0,0,0)$ to (\ref{eqn:family}) with $x_i = 0$, and suppose $N$ is the order of $\rho = m_{i+1} m_{i-1}$.
Then the $\Delta_i$ from (\ref{eqn:delta}) satisfy
\begin{equation}\label{eq:delta-cycle}
\sum_{\ell=0}^{N-1} \Big(\Delta_i (m_{i-1}\rho^{\ell}x) + \Delta_i (\rho^{\ell+1} x) \Big) = 0
\end{equation}
when $a_i^2 \neq 4$.
Moreover, when $a_i^2 = 4$, the relation \eqref{eq:delta-cycle} holds if and only if
$2a_{i-1} = a_{i+1} a_i$.
\end{proposition}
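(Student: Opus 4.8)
The plan is to parametrise the points occurring in \eqref{eq:delta-cycle} explicitly and thereby reduce the sum to a finite geometric series in $r = r_i$. A cycle of this type exists only when there is a non-zero solution with $x_i = 0$, so (as assumed throughout this section) $a_i^2 - 4$ is a square in $\F_p$ and $r \in \F_p^{\times}$; set $u = x_{i-1} \neq 0$, so that $x_{i+1} = r u$ with $r^2 + a_i r + 1 = 0$ by \eqref{eqn:xi0} and \eqref{eqn:ri-implicit}. First I would record how the moves act on triples with $x_i = 0$: by \eqref{eqn:move}, $m_{i-1}$ sends $(x_{i-1}, x_{i+1}) = (u, r u)$ to $(r^2 u, r u)$ (using $-1 - a_i r = r^2$), while $m_{i+1}$ keeps $x_{i-1} = u$ and sends $x_{i+1}$ to $r^{-1} u$. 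Hence $\rho = m_{i+1} m_{i-1}$ multiplies $u$ by $r^2$ and leaves $r$ unchanged, so that $\rho^{\ell} x$ has $(x_{i-1}, x_{i+1}) = (r^{2\ell} u,\, r^{2\ell+1} u)$ and $m_{i-1} \rho^{\ell} x$ has $(x_{i-1}, x_{i+1}) = (r^{2\ell+2} u,\, r^{2\ell+1} u)$.

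Next I would evaluate $\Delta_i$ on these points. Since $x_i = 0$, the leading term of \eqref{eqn:delta} vanishes, so $\Delta_i(x) = \tfrac12\big( a_{i-1}/x_{i-1} + a_{i+1}/x_{i+1} \big)$. Substituting the coordinates above and bringing each pair over the common denominator $2 r^{2\ell+1} u$ gives
\[
\Delta_i(\rho^{\ell} x) = \frac{a_{i-1} r + a_{i+1}}{2\, r^{2\ell+1}\, u}, \qquad \Delta_i(m_{i-1} \rho^{\ell} x) = \frac{a_{i-1} r^{-1} + a_{i+1}}{2\, r^{2\ell+1}\, u}.
\]
Adding these and using $r + r^{-1} = -a_i$, which is \eqref{eqn:ri-implicit} divided by $r$, collapses the pair to a single term:
\[
\Delta_i(m_{i-1} \rho^{\ell} x) + \Delta_i(\rho^{\ell} x) = \frac{2 a_{i+1} - a_i a_{i-1}}{2\, r^{2\ell+1}\, u}.
\]
Summing over $\ell = 0, 1, \dots, N-1$ then yields
\[
\sum_{\ell=0}^{N-1} \Big( \Delta_i(m_{i-1} \rho^{\ell} x) + \Delta_i(\rho^{\ell} x) \Big) = \frac{2 a_{i+1} - a_i a_{i-1}}{2\, r\, u} \sum_{\ell=0}^{N-1} (r^{-2})^{\ell}.
\]

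To conclude, note that $\rho$ acts on the conic $\{x_i = 0\}$ by multiplication by $r^2$, so $N = \ord(\rho)$ is the multiplicative order of $r^2$; in particular $r^{2N} = 1$. If $a_i^2 \neq 4$, then $N \geq 2$ by Lemma~\ref{lemma:N=1 iff a_i^2=4}, hence $r^2 \neq 1$, and the geometric sum $\sum_{\ell=0}^{N-1} (r^{-2})^{\ell} = (r^{-2N} - 1)/(r^{-2} - 1)$ is $0$; this proves \eqref{eq:delta-cycle}. If $a_i^2 = 4$, then $N = 1$, so the sum is the single term $(2 a_{i+1} - a_i a_{i-1})/(2 r u)$; moreover $m_{i-1} x = x$ here by Lemma~\ref{lemma:N=1 iff a_i^2=4}, so this term is just $2 \Delta_i(x)$, and it vanishes if and only if $2 a_{i+1} = a_i a_{i-1}$. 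Multiplying this relation by $a_i$ and using $a_i^2 = 4$ rewrites it as $2 a_{i-1} = a_{i+1} a_i$, the condition asserted in the proposition.

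I do not expect a genuine obstacle here: the substance is that $\Delta_i(m_{i-1}\rho^{\ell}x)$ and $\Delta_i(\rho^{\ell}x)$ combine, via the exchange-polynomial relation \eqref{eqn:ri-implicit}, into a single monomial in $r$ proportional to $2 a_{i+1} - a_i a_{i-1}$, after which the sum is geometric and vanishes exactly because $\rho$ has order $N$. The step that rewards attention is the bookkeeping in the parametrisation --- that each $m_{i\pm1}$ exchanges $r$ with $r^{-1}$, that $\rho$ scales the line by $r^2$, and that $N = \ord(r^2)$ --- since the collapse of the geometric series rests on $r^{2N} = 1$ holding together with $r^2 \neq 1$ in precisely the range $N \geq 2$ furnished by Lemma~\ref{lemma:N=1 iff a_i^2=4}.
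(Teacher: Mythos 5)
Your proposal is correct and follows essentially the same route as the paper: restrict $\Delta_i$ to the pair of lines $x_{i+1}=r^{\pm1}x_{i-1}$, observe that $\rho$ scales the parametrisation by $r^2$ so the cycle sum becomes a geometric series in $r^{-2}$ that vanishes when $r^2\neq 1$ (i.e.\ $a_i^2\neq 4$), and reduce the $N=1$ case to the vanishing of a single term. The only cosmetic difference is that you collapse each pair to a multiple of $2a_{i+1}-a_ia_{i-1}$ and then use $a_i^2=4$ to convert this to the stated condition $2a_{i-1}=a_{i+1}a_i$, whereas the paper substitutes $r^{-1}=-a_i/2$ directly and lands on $2a_{i-1}-a_{i+1}a_i$ at once.
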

\begin{proof}
First we simplify (\ref{eqn:delta}) to
\[
x_i = 0 \implies \Delta_i(x) = \frac{1}{2} \Big( \frac{a_{i-1}}{x_{i-1}}+\frac{a_{i+1}}{x_{i+1}} \Big).
\]
In the same way all around the cycle, $\Delta_i (gx)$ is defined for all $g$ in the dihedral group $\langle m_{i-1}, m_{i+1} \rangle$.

The dihedral action has a simple effect on $\Delta_i(x)$.
If $x_i=0$, then $x_{i+1}/x_{i-1}=r$ where $r^2+a_i r + 1 = 0$.
For $x$ on the line corresponding to a specific choice of $r$, we have
\[
\Delta_i(x) = \frac{1}{2}\Big( \frac{a_{i-1}}{x_{i-1}}+\frac{a_{i+1}}{x_{i+1}}\Big) = \frac{1}{2x_{i-1}} (a_{i-1}+a_{i+1}r^{-1}).
\]
The rotation $\rho = m_{i+1}m_{i-1}$ changes $x_{i-1}$ to $r^2 x_{i-1}$ and therefore
\[
\Delta_i(\rho x) = r^{-2} \Delta_i(x).
\]

If $a_i^2 \neq 4$, we have $r^{-2} \neq 1$ and thus $1+r^{-2}+\ldots + (r^{-2})^{N-1}=0$. We now have
\begin{align*}
&\sum_{\ell=0}^{N-1} \Big(\Delta_i (m_{i-1}\rho^{\ell}x) + \Delta_i (\rho^{\ell+1} x) \Big) = \\
&\big(1 + r^{-2} + \ldots + (r^{-2})^{N-1}\big)\big( \Delta_{i-1}(m_{i-1}x) + \Delta_i(x) \big) = 0.
\end{align*}

If $a_i^2 = 4$, we have $N=1$ by Lemma~\ref{lemma:N=1 iff a_i^2=4}.
Thus it suffices to show 
\begin{equation}\label{eq:delta-cycle for a_i^2=4}
    \Delta_i(m_{i-1} x) + \Delta_i (x) = 0.
\end{equation}
By Lemma~\ref{lemma:N=1 iff a_i^2=4}, we have $m_{i-1} x = x$ and $r^{-1} = - a_i / 2$,
and the left-hand side of \eqref{eq:delta-cycle for a_i^2=4} is computed as
\begin{align*}
    \Delta_i(m_{i-1} x) + \Delta_i (x) = 2 \Delta_i (x)
    &= \frac{1}{x_{i-1}} (a_{i-1} + a_{i+1} r^{-1})\\
    &= \frac{1}{2 x_{i-1}} (2 a_{i-1} - a_{i+1} a_i).
\end{align*}
This vanishes if and only if $2 a_{i-1} = a_{i+1} a_i$.
\end{proof}

In the special case $N=1$, one has not only $x=\rho x$ but in fact $x=m_{i-1}x$ and $x=m_{i+1}x$ from Lemma~\ref{lemma:N=1 iff a_i^2=4}.
Therefore (\ref{even}) and (\ref{odd}) force us to take
\begin{equation*}
    \Delta_{i-1} (x) = \Delta_{i-1} (m_{i-1} x) = \frac{s}{2},
\end{equation*}
and then $\Delta_i(x)=0$.
As long as $2a_{i-1} = a_{i+1}a_{i}$, the resulting functions $\Delta_i$ solve (\ref{eqn:delta-total}) and (\ref{eqn:delta-pair}).

\subsection{Double Fixed Points}
When defining the angle function $\Delta_i$ at a triple with $x_i=0$, we appealed to Proposition~\ref{prop:cycle nondegenerate} to conclude that if $N \geq 2$, then the $2N$ points \eqref{eq:cycle points} were all distinct. In other words if $a_i^2\neq 4$, then no triple with $x_i=0$ can be fixed by $m_{i\pm1}$ by Lemma~\ref{lemma:N=1 iff a_i^2=4}. We can say more in the reverse direction, that is describe the $i$th coordinate of the triples that are fixed under the action of $m_{i-1}$ and $m_{i+1}$ simultaneously.

\begin{proposition}
    Suppose $m_{i-1}x=m_{i+1}x=x$, then
    \begin{align*}
        0=x_i^2(u^2-4)(u^2+a_{i-1}a_{i+1}u+a_{i-1}^2+a_{i+1}^2-4)
    \end{align*}
    where $u=sx_i-a_i$.
\end{proposition}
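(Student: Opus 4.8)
The plan is to convert the two fixed-point conditions into a pair of linear equations in the coordinates $x_{i-1}$ and $x_{i+1}$, to eliminate these two coordinates using the defining equation (\ref{eqn:family}), and to extract the claimed factorisation from what remains. Since $m_{i-1}$ alters only the $(i-1)$-st coordinate and $m_{i+1}$ only the $(i+1)$-st, the hypotheses $m_{i-1}x=x$ and $m_{i+1}x=x$ read off from (\ref{eqn:move}) as $2x_{i-1}=ux_{i+1}-a_{i+1}x_i$ and $2x_{i+1}=ux_{i-1}-a_{i-1}x_i$, using $sx_i-a_i=u$ with $u$ as in the statement. Call this pair $(\dagger)$.

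The first step I would carry out is to combine $(\dagger)$ with (\ref{eqn:family}) so as to kill the mixed terms. Multiplying the first equation of $(\dagger)$ by $x_{i-1}$, the second by $x_{i+1}$, and adding gives $2(x_{i-1}^2+x_{i+1}^2)=2u\,x_{i-1}x_{i+1}-a_{i+1}x_ix_{i-1}-a_{i-1}x_ix_{i+1}$; on the other hand (\ref{eqn:family}) gives $2(x_{i-1}^2+x_{i+1}^2)=2sx_ix_{i-1}x_{i+1}-2x_i^2-2a_ix_{i-1}x_{i+1}-2a_{i-1}x_ix_{i+1}-2a_{i+1}x_ix_{i-1}$. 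Equating the two and noting that the terms in $x_{i-1}x_{i+1}$ cancel because $sx_i-a_i-u=0$, one is left with the identity $x_i\big(2x_i+a_{i-1}x_{i+1}+a_{i+1}x_{i-1}\big)=0$.

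The second step is to solve $(\dagger)$: substituting one equation of $(\dagger)$ into the other yields the identities $(u^2-4)x_{i-1}=x_i(ua_{i-1}+2a_{i+1})$ and $(u^2-4)x_{i+1}=x_i(ua_{i+1}+2a_{i-1})$, and I would be careful to check that these hold as polynomial consequences of $(\dagger)$, valid even when $u^2=4$. Multiplying the identity $x_i(2x_i+a_{i-1}x_{i+1}+a_{i+1}x_{i-1})=0$ through by $u^2-4$ and substituting these two relations converts it into $x_i^2\big(2(u^2-4)+a_{i-1}(ua_{i+1}+2a_{i-1})+a_{i+1}(ua_{i-1}+2a_{i+1})\big)=0$, and expanding the bracket collapses it to $2x_i^2\big(u^2+a_{i-1}a_{i+1}u+a_{i-1}^2+a_{i+1}^2-4\big)=0$; as $p$ is odd, the factor $2$ drops, and putting back the (now redundant) factor $u^2-4$ recovers the stated equation. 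I expect the main obstacle to be purely clerical: keeping the cyclic indices in (\ref{eqn:move})--(\ref{eqn:family}) straight and verifying the two cancellations, together with the minor subtlety that the elimination identities must be established identically rather than only for $u^2\neq4$, so that nothing is lost in the degenerate case $u^2=4$ --- where, in any case, the claimed expression vanishes on its own.
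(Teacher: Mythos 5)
Your proof is correct, and it takes a somewhat different (and leaner) route than the paper's. Both arguments start identically: the fixed-point conditions give the pair $(\dagger)$, and substituting one equation into the other yields the elimination identities $(u^2-4)x_{i\pm1}=(a_{i\pm1}u+2a_{i\mp1})x_i$, valid identically as you note. The paper then multiplies the full surface equation by $(u^2-4)^2$ and substitutes these identities everywhere, which requires a sizeable polynomial expansion and produces the factor $u^2-4$ in the conclusion as an artefact of that clearing of denominators. You instead first combine $(\dagger)$ with \eqref{eqn:family} to extract the intermediate relation $x_i\bigl(2x_i+a_{i-1}x_{i+1}+a_{i+1}x_{i-1}\bigr)=0$, and only then multiply by a single factor of $u^2-4$; the computation collapses to a one-line expansion and, since $p$ is odd, you obtain the strictly stronger statement
\[
x_i^2\bigl(u^2+a_{i-1}a_{i+1}u+a_{i-1}^2+a_{i+1}^2-4\bigr)=0,
\]
from which the proposition follows by tacking on the redundant factor $u^2-4$. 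This sharpening is not merely cosmetic: it upgrades the remark following the proposition, showing that \emph{every} double fixed point with $x_i\neq 0$ forces $(-sx_i+a_i,\,a_{i-1},\,a_{i+1})$ onto Cayley's cubic, without having to separately rule out the alternative $u^2=4$. The only thing to keep straight is that your first step does use the hypothesis that $x$ lies on the surface \eqref{eqn:family}, exactly as the paper's proof does, so the two arguments rest on the same assumptions.
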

\begin{proof}
    The condition that $x$ is fixed by both $m_{i-1}$ and $m_{i+1}$ can be written as  
        \begin{align*}
           \begin{cases}
               2x_{i-1}&=sx_ix_{i+1}-a_ix_{i+1}-a_{i+1}x_i = (sx_i-a_i)x_{i+1}-a_{i+1}x_i \\
               2x_{i+1}&=sx_ix_{i-1}-a_ix_{i-1}-a_{i-1}x_i = (sx_i-a_i)x_{i-1}-a_{i-1}x_i.
           \end{cases} 
        \end{align*}
    By setting $u=sx_i-a_i$, multiplying both equations by 2, and substituting each $2x_{i\pm 1}$ into the other equation, we arrive at
    \begin{align*}
           \begin{cases}
               4x_{i-1}& = 2ux_{i+1}-2a_{i+1}x_i=u^2x_{i-1}-a_{i-1}ux_{i}-2a_{i+1}x_i \\
               4x_{i+1}& = 2ux_{i-1}-2a_{i-1}x_i=u^2x_{i+1}-a_{i+1}ux_{i}-2a_{i-1}x_i.
           \end{cases} 
        \end{align*}
    Rearranging terms to isolate $x_{i-1}$ and $x_{i+1}$ respectively yields
    \begin{align*}
           \begin{cases}
               (u^2-4)x_{i-1}& = (a_{i-1}u+2a_{i+1})x_i \\
               (u^2-4)x_{i+1}& = (a_{i+1}u+2a_{i-1})x_i.
           \end{cases} 
        \end{align*}
    By subtracting $sx_1x_2x_3$ from both sides of Equation~(\ref{eqn:family}), multiplying by $(u^2-4)^2$, substituting in $(u^2-4)x_{i\pm1}=(a_{i\pm1}u+2a_{i\mp1})x_i$, and simplifying, we have
    \begin{align*}
        (u^2-4)^2&(x_1^2+x_2^2+x_3^2+a_1x_2x_3+a_2x_1x_3+a_3x_1x_2-sx_1x_2x_3) \\ 
        &= x_i^2(u^2-4)^2+((a_{i-1}u+2a_{i+1})x_i)^2+((a_{i+1}u+2a_{i-1})x_i)^2\\
        &\ \ \ \ +a_i((a_{i-1}u+2a_{i+1})x_i)((a_{i+1}u+2a_{i-1})x_i) \\
        &\ \ \ \ + (a_{i-1}x_i((a_{i+1}u+2a_{i-1})x_i) + a_{i+1}x_i((a_{i-1}u+2a_{i+1})x_i))(u^2-4)
        \\
        &\ \ \ \ -sx_i((a_{i-1}u+2a_{i+1})x_i)((a_{i+1}u+2a_{i-1})x_i) \\
        &= x_i^2(u^2-4)(u^2+a_{i-1}a_{i+1}u+a_{i-1}^2+a_{i+1}^2-4). \\
    \end{align*}
    Since this expression is 0 for any solution of Equation~(\ref{eqn:family}), we arrive at
        \begin{align}\label{eqn:doublefixed}
            x_i^2(u^2-4)(u^2+a_{i-1}a_{i+1}u+a_{i-1}^2+a_{i+1}^2-4) = 0.
        \end{align}
\end{proof}
By examining the factors of Equation~(\ref{eqn:doublefixed}), we see that the Cayley cubic also comes into play when evaluating double fixed points. If the factor $u^2+a_{i-1}a_{i+1}u+a_{i-1}^2+a_{i+1}^2-4$ vanishes, then $x$ is fixed by both $m_{i-1}$ and $m_{i+1}$, and $(-sx_i+a_i, a_{i-1}, a_{i+1})$ lies on Cayley's cubic.

\subsection{Example}

In the Markoff case $a_1=a_2=a_3=0$, we have $r^2+1=0$ so $N=2$. The cycles have length 4 and $\Delta_i(x)=0$ throughout the subset where $x_i=0$. If we start from $\Delta_{i-1}(x)=\delta$, then $\Delta_{i\pm 1}$ cycle through the values $\delta$ and $s-\delta$. A symmetrical choice $\delta=\frac{s}{2}$ makes these constant, which is the approach Martin used in \cite{Martin}. For other parameters $a_1$, $a_2$, $a_3$, if $\rho$ has a larger order $N$, it may not be possible to have a constant vector $(\Delta_{i-1},\Delta_{i+1})$ and it may not be possible to have $\Delta_{i-1}=\Delta_{i+1}$.
In general, a move $m_{i \pm 1}$ may change all three values $\Delta_{1,2,3}(x)$.
Instead of $s=s/2+s/2$ pointwise, there is a somewhat similar balance if we average both $i-1$ and $i+1$ over the whole cycle:
\[
\frac{1}{2N} \sum_{g \in \langle m_{i-1},m_{i+1}\rangle} \frac{\Delta_{i-1}+\Delta_{i+1}}{2}(gx) = s.
\]

\begin{figure}
\begin{tikzpicture}[scale=2]
\draw (0,0) node(0){$(\delta,s-\delta)$};
\draw (0)++(225:1) node(1){$(s-\delta,\delta)$};
\draw (0,0)++(225:1)++(-45:1) node(2){$(\delta,s-\delta)$};
\draw (0)++(-45:1) node(3){$(s-\delta,\delta)$};
\draw (0)--(1)--(2)--(3)--(0);
\end{tikzpicture}
\caption{
Values of $(\Delta_{i-1},\Delta_{i+1})$ in the Markoff case ($a_1=a_2=a_3=0$, all $\Delta_i=0$ on $\{x_i=0\}$, and $r=\sqrt{-1}$), where $\delta$ is an arbitrary value of $\Delta_{i-1}(x)$ to start the cycle.
}
\end{figure}
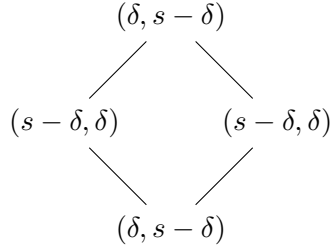

\section{Quadratic obstruction: proof of Theorem~\ref{thm:breakup}}
\label{section:quadobstruction}
In this section, we show there are at least two orbits for parameters of the special form $(2\sigma,\pa,\pa\sigma)$ where $\sigma=\pm 1$ and $\pa \in \F_p$ (Theorem~\ref{thm:breakup}).
The same method shows that, specialising further to $\pa=\pm 2$, there are at least four orbits.
To simplify the notation, let us rescale so that
\[
s=1.
\]
We choose the indices so that $a_i=2\sigma$, $a_{i+1}=\pa$, and $a_{i-1}=\pa\sigma$.

The proof uses both of Vieta's rules for the roots of a quadratic equation.
If $x$ solves (\ref{eqn:family}), then
\begin{align}
x_i+x_i' &= sx_{i-1}x_{i+1} - a_{i-1}x_{i+1}-a_{i+1}x_{i-1} \label{vieta-sum} \\
x_i x_i' &= x_{i-1}^2+x_{i+1}^2+a_i x_{i-1}x_{i+1} \label{vieta-product}.
\end{align}
If $a_i = 2\sigma$, then we have a perfect square
\[
x_ix_i' = (x_{i-1} + \sigma x_{i+1})^2
\]
so there is a quadratic obstruction:
\begin{equation} \label{eqn:obstruction}
\chi(x_i)\chi(x_i') \neq -1.
\end{equation}
The equation (\ref{eqn:family}) can also be written as
\begin{align*}
(x_i+ x_{i-1}+\sigma x_{i+1})^2 &= x_i(x_{i-1}x_{i+1}-\pa x_{i-1}-\pa \sigma x_{i+1}+2x_{i-1}+2\sigma x_{i+1}) \\
&=x_i \big(x_i+x_i' + 2(x_{i-1}+\sigma x_{i+1}) \big)
\end{align*}
so
\begin{equation} \label{eqn:obstruction2}
\chi(x_i) \chi( x_i+x_i' + 2x_{i-1}+2\sigma x_{i+1}) \neq -1.
\end{equation}

Therefore $\chi(x_i)$ and $\chi(x_i+x_i'+2x_{i-1}+2\sigma x_{i+1})$ are either both $\leq 0$ or both $\geq 0$.
We claim that the moves $m_1$, $m_2$, $m_3$ preserve each of the corresponding subsets, which combined with the fact that both $\chi(x_i)$ and $\chi(x_i+x_i'+2x_{i-1}+2\sigma x_{i+1})$ cannot be simultaneously 0 for $x \neq (0,0,0)$, implies Theorem~\ref{thm:breakup}.
The subsets may decompose further, as indeed they do if $\pa= \pm 2$.

To see that $\chi(x_i)$ and $\chi(x_i+x_i'+2x_{i-1}+2\sigma x_{i+1})$ cannot simultaneously be equal to 0, suppose $\chi(x_i)=0$. This implies that $x_i=0$. Plugging $x_i=0$ into equation~(\ref{eqn:family}) produces $x_{i-1}+\sigma x_{i+1}=0$. If we were to have $\chi(x_i+x_i'+2x_{i-1}+2\sigma x_{i+1})=0$ as well, then $x_i'=0$. Plugging $x_i=x_i'=x_{i-1}+\sigma x_{i+1}=0$ into the Vieta involution~(\ref{vieta-sum}) gives $sx_{i-1}x_{i+1}=0$, so either $x_{i-1}=0$ or $x_{i+1}=0$. In either case, we arrive at the only point with $\chi(x_i)=\chi(x_i+x_i'+2x_{i-1}+2\sigma x_{i+1})=0$ is $x=(0,0,0)$.

The claim is clear for the involution $m_i$ because $x_i \mapsto x_i'$ leaves $x_i+x_i'+2x_{i-1}+2\sigma x_{i+1}$ invariant. Meanwhile (\ref{eqn:obstruction}) shows that $\chi(x_i)$ and $\chi(x_i')$ cannot differ by a sign (although either might be 0).

The claim holds for $m_{i\pm 1}$ as well after further calculation.
For $m_{i+1}$, one shows that
\begin{equation}
  \begin{split}
&(x_{i-1}x_{i+1} -\sigma(\pa-2)x_{i+1}-(\pa-2) x_{i-1}) \\ 
&\quad\cdot (x_{i-1}x_{i+1}' -\sigma(\pa-2)x_{i+1}'-(\pa-2) x_{i-1}) \\
&\quad\quad = (x_{i-1}^2+x_{i-1}x_i - \sigma(\pa-2) x_i )^2 \label{perfect-square}
  \end{split}
\end{equation}
is a square.
A similar identity holds for $i-1$, and can be obtained automatically by considering $-\pa$ instead of $\pa$ if $\sigma=-1$.
\begin{proof}[Proof of (\ref{perfect-square})]
Starting from the top, the idea is to collect terms $x_{i+1}x_{i+1}'$ or $x_{i+1}+x_{i+1}'$ so that Vieta's rule can be applied, as in (\ref{vieta-sum}) and (\ref{vieta-product}) for $x_{i+1}$ instead of $x_i$.
That gives
\begin{align*}
&x_{i+1}x_{i+1}' \big(x_{i-1}-(\pa-2)\sigma\big)^2 \\
+&(x_{i+1}+x_{i+1}') (\pa-2)\big(-x_{i-1}+(\pa-2)\sigma\big)x_{i-1} \\
+&(\pa-2)^2x_{i-1}^2 .
\end{align*}
After substituting Vieta's rules
\begin{align*}
x_{i+1}x_{i+1}' &= \pa x_{i-1}x_i + x_{i-1}^2+x_i^2 \\
x_{i+1}+x_{i+1}' &= x_{i-1}x_i-\sigma \pa x_i - 2\sigma x_{i-1},
\end{align*}
several terms cancel. In particular, the coefficient of $x_{i-1}^2$ is
\[
(\pa-2)^2+(\pa-2)^2-2(\pa-2)^2 = 0,
\]
the coefficient of $x_{i-1}^3$ is
\[
-2\sigma (\pa-2)+2\sigma (\pa-2) = 0,
\]
and the coefficient of $x_{i-1}x_i$ is
\[
\pa(\pa-2)^2 - \pa(\pa-2)^2 = 0.
\]
From the remaining terms, one has
\begin{align*}
&(x_{i-1}x_{i+1} -\sigma(\pa-2)x_{i+1}-(\pa-2) x_{i-1}) \\ &\quad \cdot
(x_{i-1}x_{i+1}' -\sigma(\pa-2)x_{i+1}'-(\pa-2) x_{i-1}) \\
&= x_{i-1}^4 + 2x_{i-1}^3 x_i + x_{i-1}^2x_i^2 \\ &\quad - 2\sigma(\pa-2)x_{i-1}^2 x_i - 2\sigma(\pa-2) x_{i-1} x_i^2 + (\pa-2)^2 x_i^2 \\
&= (x_{i-1}^2 + x_{i-1} x_i - \sigma (\pa-2) x_i )^2
\end{align*}
as required.
\end{proof}

One can also start from the other side of (\ref{perfect-square}) with
\[
x_{i-1}^2+x_{i-1}x_i - \sigma(\pa-2) x_i = x_{i-1}^2+x_{i+1}+x_{i+1}'+2\sigma(x_i-x_{i-1}).
\]
Squaring this gives another approach to proving the identity.

\subsection{Further splitting if $\pa=\pm 2$}

In the most degenerate case, (\ref{eqn:family}) becomes
\[
(x \pm y + z)^2 = s xyz
\]
so
\[
\chi(x_1)\chi(x_2)\chi(x_3) \neq -\chi(s).
\]
The moves preserve four subsets where any two characters assume a given sign.

\section{Number of solutions mod $p$}

In this section, we compute the number of solutions to (\ref{eqn:family}) over $\F_p$ for a prime $p \neq 2$.
Similar calculations were done by Carlitz \cite{Carlitz}, who considered varying the coefficients in $x_1^2+x_2^2+x_3^2$ instead of adding off-diagonal terms.
Recall that $\chi$ denotes the quadratic character modulo $p$.
The number of solutions depends on the three values $\chi(a_i^2-4)$ as well as whether the parameters lie on the surface
\[
a_1^2+a_2^2+a_3^2=a_1a_2a_3+4.
\]

\begin{proposition} \label{prop:numel}
The number of solutions to (\ref{eqn:family}) modulo $p$ excluding $(0,0,0)$ is
\begin{equation} \label{eqn:numel2}
    p^2+ p\Big(\sum_{i=1}^3 \chi(a_i^2-4)+C(a_1,a_2,a_3) \Big)
\end{equation}
where $C: \F_p^3 \rightarrow \{0,1,-1\}$ is given by
\begin{align*}
C(a_1,a_2,a_3)=
\begin{cases}
0
& \text{if } a_1^2+a_2^2+a_3^2\neq a_1a_2a_3+4, \\[2mm]
-\chi(\pa^2-4)
& \begin{aligned}[t]
  &\text{if } a_i = 2\sigma,\ a_{i-1}=\pa,\ \text{and } a_{i+1}=\pa \\
  &\text{for some } i,\sigma=\pm 1,\text{ and }\pa,
  \end{aligned} \\[2mm]
\displaystyle -\prod_i \chi(a_i^2-4)
& \text{otherwise.}
\end{cases}
\end{align*}

\end{proposition}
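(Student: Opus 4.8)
The plan is to slice by one coordinate and reduce the count to an elementary quadratic character sum; throughout I assume $s\neq 0$ (the case $s=0$ is genuinely different and would have to be treated separately). Regard (\ref{eqn:family}) as a quadratic in $x_1$ with $x_2,x_3$ as parameters,
\[
x_1^2 + (a_3x_2 + a_2x_3 - sx_2x_3)\,x_1 + (x_2^2 + x_3^2 + a_1x_2x_3) = 0,
\]
so that, with the convention $\chi(0)=0$, the number of $x_1$ with a given $(x_2,x_3)$ is $1+\chi(\Delta(x_2,x_3))$, $\Delta$ being the discriminant. Hence the number of solutions to (\ref{eqn:family}), counting $(0,0,0)$, is $p^2+\sum_{x_2,x_3}\chi(\Delta)$. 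Freezing $x_3=c$, one has $\Delta = A(c)x_2^2 + B(c)x_2 + D(c)$ with $A(c) = (a_3-sc)^2-4$, $D(c) = (a_2^2-4)c^2$, and $B(c) = 2c\bigl(a_2(a_3-sc)-2a_1\bigr)$, and the inner sum over $x_2$ is given by the standard evaluations: it is $-\chi(A(c))$ when $A(c)\neq 0$ and the $x_2$-discriminant $B(c)^2-4A(c)D(c)$ is nonzero, $(p-1)\chi(A(c))$ when $A(c)\neq 0$ and that discriminant vanishes, $0$ when $A(c)=0\neq B(c)$, and $p\,\chi(D(c))$ when $A(c)=B(c)=0$.

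For all but finitely many $c$ one is in the first case, so I would first sum $-\chi(A(c))$ over \emph{all} $c$: substituting $v=a_3-sc$ (a bijection of $\F_p$ since $s\neq 0$) gives $-\sum_v\chi\bigl((v-2)(v+2)\bigr) = 1$, the usual value of such a sum for $p\neq 2$. The total count is therefore $p^2+1+p\cdot E$, where $E$ collects the corrections from the exceptional $c$; after discarding $(0,0,0)$ it remains to prove $E = \sum_i\chi(a_i^2-4)+C(a_1,a_2,a_3)$.

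The heart of the matter is the factorisation
\[
B(c)^2 - 4A(c)D(c) = 16c^2\,G(a_3-sc),\qquad G(v):=v^2 - a_1a_2v + a_1^2 + a_2^2 - 4,
\]
together with: the discriminant of $G$ in $v$ is $(a_1^2-4)(a_2^2-4)$, so $G$ has roots in $\F_p$ iff $\chi\bigl((a_1^2-4)(a_2^2-4)\bigr)\ge 0$; the roots $v_\pm$ of $G$ satisfy $(v_+^2-4)(v_-^2-4) = (a_1^2-a_2^2)^2$, and (writing $a_j=\zeta_j+\zeta_j^{-1}$ over $\F_p$ or $\F_{p^2}$, so that $v_\pm = \zeta_1\zeta_2^{\pm1}+(\zeta_1\zeta_2^{\pm1})^{-1}$) the product $(v_\pm^2-4)(a_1^2-4)$ is a square in $\F_p$; and $G(a_3) = a_1^2+a_2^2+a_3^2-a_1a_2a_3-4$, so $a_3$ is a root of $G$ exactly when $(a_1,a_2,a_3)$ lies on Cayley's cubic. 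Granting these, the exceptional $c$ are $c=0$ (due to the $c^2$ factor) together with those where $a_3-sc\in\{\pm 2, v_+, v_-\}$. The slice $c=0$ contributes $\chi(a_3^2-4)$; using that $G$ has $\F_p$-roots iff $\chi((a_1^2-4)(a_2^2-4))\ge 0$ and that each such root $v$ has $\chi(v^2-4)=\chi(a_1^2-4)=\chi(a_2^2-4)$ when nonzero (a root equal to $\pm 2$ migrating into the $A=B=0$ case with unchanged value), the roots of $G$ contribute $\chi(a_1^2-4)+\chi(a_2^2-4)$, so $E=\sum_i\chi(a_i^2-4)$, i.e. $C=0$, off Cayley's cubic. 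On Cayley's cubic the root $v=a_3$ of $G$ coincides with the slice $c=0$ already counted, so only the other root $v'=a_1a_2-a_3$ contributes separately; using $(a_3^2-4)(v'^2-4)=(a_1^2-a_2^2)^2$ and the fact that on the symmetric surface $a_1^2+a_2^2+a_3^2=a_1a_2a_3+4$ the signs $\chi(a_i^2-4)$ are all equal when nonzero, one gets $E=\sum_i\chi(a_i^2-4)-\prod_i\chi(a_i^2-4)$ in the generic Cayley case, i.e. $C=-\prod_i\chi(a_i^2-4)$. Finally, a coordinate equal to $2\sigma$ on Cayley's cubic forces the shape $(2\sigma,\pa,\pa\sigma)$, where $\prod_i\chi(a_i^2-4)=0$; here $G$ has a root at $2\sigma$ (so $A$ and the $x_2$-discriminant both vanish at $c=0$) and there is an extra slice with $A=0$, and tracking these gives $E=\chi(\pa^2-4)$, i.e. $C=-\chi(\pa^2-4)$.

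The main obstacle is not the idea but the bookkeeping: one must chase every coincidence among $0$, the two values $c_\pm$ with $a_3-sc_\pm=\pm2$, and the roots of $G$ — which occur when some $a_i=\pm2$, when $a_1=\pm a_2$, at the Cayley nodes (\ref{eqn:cayley-nodes}), and when $\pa=0$ in the special-form branch — decide in each degenerate slice which of the four evaluation formulas applies, and verify that the totals always collapse to a value of $C$ in $\{0,1,-1\}$ matching the three cases of the statement. Organising the exceptional $c$ by the value of $v=a_3-sc$ and using the parametrisation $a_j=\zeta_j+\zeta_j^{-1}$ to evaluate the quadratic characters is, I expect, the cleanest way to keep this under control; the manifest symmetry of the final answer in $a_1,a_2,a_3$ is a useful consistency check on the asymmetric computation.
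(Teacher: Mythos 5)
Your proposal is correct and follows essentially the same route as the paper: slicing by $x_3$, counting each conic slice (your quadratic character sums in $x_2$ are equivalent to the paper's completion of the square and conic classification, with the same generic count $p-\chi((a_3-sx_3)^2-4)$ and the same exceptional locus $c=0$ or $G(a_3-sc)=0$), and resolving the case analysis via the observation that $\chi\big((a_j^2-4)(v^2-4)\big)\neq -1$ for $\F_p$-roots $v$ of $G$, which is exactly the paper's symmetriser identity \eqref{eqn:symmetriser}. The bookkeeping you defer at the end (tracking coincidences among $c=0$, $v=\pm 2$, and the roots of $G$, including the position of the coordinate equal to $2\sigma$ in the special branch) is precisely what the paper's proof carries out case by case, and your stated intermediate identities and case outcomes all check out.
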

The function $C$ tells us that the number of solutions is $p^2+np$ where $|n|\leq 3$.
See \cite{Swinn} for the extreme values of $n$ among cubic surfaces.

The proof amounts to understanding conic sections over $\F_p$. 
Fixing one coordinate in (\ref{eqn:family}) defines a curve in the other two, which is a conic of the form
\begin{equation} \label{eqn:conic-bdef}
    x^2+y^2+Bxy+Dx+Ey+F=0.
\end{equation}
For a given value of $x_i$ and $(x,y)=(x_{i-1},x_{i+1})$, the parameters are
\begin{equation} \label{eqn:bdef}
B=a_i-sx_i, \quad D=a_{i+1}x_i, \quad E=a_{i-1}x_i, \quad F = x_i^2.
\end{equation}
In most cases, the number of solutions is $p-\chi(B^2-4)$, that is $p+1$ points on an ellipse, $p-1$ points on a hyperbola, or $p$ points on a parabola.
However, several other possibilities occur for special values of $B,D,E,F$.
There are $2p$ points on a pair of parallel lines, $2p-1$ points on a pair of intersecting lines, $p$ points on two copies of a single line, just $1$ point on a pair of ``imaginary" lines intersecting at a single ``real" point, and $0$ points on a pair of imaginary parallel lines.
To determine the outcome, we must put conics into a standard form by completing the square.

It is useful to note the following fact about shifted squares (which was also known to Carlitz).
For any $c \neq 0$ in $\F_p$, it follows from Euler's criterion that
\begin{equation} \label{eqn:shifted-squares}
\sum_{t \in \F_p} \chi(t^2-c) = -1
\end{equation}
where $\chi$ is the quadratic character.

\begin{proposition} \label{prop:count-conic}
For $\alpha \neq 0$ and $\beta \neq 0$ in $\F_p$, the number of solutions to $x^2-\alpha y^2 = \beta$ is
\[
p - \chi(\alpha).
\]
\end{proposition}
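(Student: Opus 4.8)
The plan is to count solutions by slicing over the variable $y$, reducing everything to the shifted-squares identity \eqref{eqn:shifted-squares}. Fix $y \in \F_p$. With the convention $\chi(0)=0$, the number of $x \in \F_p$ with $x^2 = \beta + \alpha y^2$ is exactly $1 + \chi(\beta + \alpha y^2)$: this correctly returns a single solution $x=0$ when the right-hand side vanishes, two solutions when it is a nonzero square, and none when it is a nonzero non-square. Summing over all $y$ gives
\[
N = \sum_{y \in \F_p} \bigl(1 + \chi(\beta + \alpha y^2)\bigr) = p + \sum_{y \in \F_p} \chi(\beta + \alpha y^2).
\]

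Next I would extract the factor $\chi(\alpha)$. Since $\alpha \neq 0$, write $\beta + \alpha y^2 = \alpha\,(y^2 - c)$ with $c = -\beta\alpha^{-1}$, and observe that $c \neq 0$ precisely because $\beta \neq 0$. Multiplicativity of $\chi$ then yields $\chi(\beta + \alpha y^2) = \chi(\alpha)\,\chi(y^2 - c)$, so
\[
N = p + \chi(\alpha) \sum_{y \in \F_p} \chi(y^2 - c).
\]
Now apply \eqref{eqn:shifted-squares}, which is valid since $c \neq 0$: the inner sum equals $-1$. Hence $N = p - \chi(\alpha)$, as claimed.

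\textbf{Expected obstacle.} There is essentially no obstacle here; the proof is a direct application of the preceding identity. The only points that need care are the convention $\chi(0)=0$ in the slice count, and the bookkeeping observation that the two hypotheses are used for different purposes — $\alpha \neq 0$ to rewrite the summand as $\chi(\alpha)\chi(y^2-c)$, and $\beta \neq 0$ to guarantee $c \neq 0$ so that \eqref{eqn:shifted-squares} applies. (One could alternatively argue structurally: if $\alpha$ is a square the form $x^2-\alpha y^2$ factors into two independent linear forms and one counts $uv=\beta$ with $\beta\neq 0$, giving $p-1$ solutions; if $\alpha$ is a non-square it is the norm form of $\F_{p^2}/\F_p$, whose norm map is surjective onto $\F_p^\times$ with fibres of size $p+1$. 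Either way the count is $p - \chi(\alpha)$. The character-sum argument above is cleaner and fits the tools just introduced.)
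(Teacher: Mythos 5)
Your proof is correct and follows exactly the paper's own argument: slice over $y$, count $1+\chi(\beta+\alpha y^2)$ solutions in $x$, factor out $\chi(\alpha)$, and apply the shifted-squares identity \eqref{eqn:shifted-squares} to the sum $\sum_y \chi(y^2+\alpha^{-1}\beta)$. The extra remarks on where each hypothesis is used, and the alternative norm-form argument, are fine but not needed.
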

\begin{proof}
For each $y$, there are $1+\chi(\beta+\alpha y^2)$ solutions to $x^2=\beta + \alpha y^2$.
Summing over $y$ gives the total as
\[
\sum_y \big( 1 + \chi(\beta + \alpha y^2) \big) = p + \chi(\alpha) \sum_y \chi(y^2 + \alpha^{-1} \beta)
\]
which is $p-\chi(\alpha)$ by (\ref{eqn:shifted-squares}).
\end{proof}

\subsection{Completing the square}

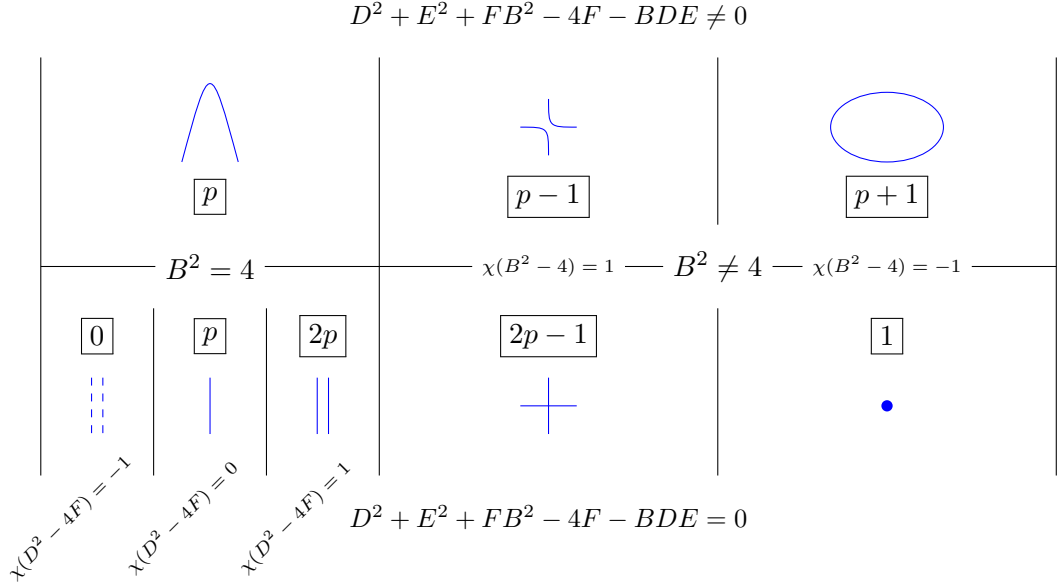
\begin{figure}
\centering
\begin{tikzpicture}[scale=1.5]
\pgfmathsetmacro{\h}{0.618}
\pgfmathsetmacro{\c}{0.6*\h}
\pgfmathsetmacro{\g}{3.6*\h}
\pgfmathsetmacro{\x}{-0.25}
\pgfmathsetmacro{\s}{0.05}
\draw (0,-3*\h)--(0,3*\h);
\draw (3,-3*\h)--(3,3*\h);
\draw (9,-3*\h)--(9,3*\h);
\draw (1,-\c)--(1,-3*\h);
\draw (2,-\c)--(2,-3*\h);
\draw (6,\c)--(6,3*\h);
\draw (6,-\c)--(6,-3*\h);
\draw (4.5,\g) node{{\small $D^2+E^2+FB^2-4F-BDE \neq 0$}};
\draw (4.5,-\g) node{{\small $D^2+E^2+FB^2-4F-BDE = 0$}};
\draw (1.5,0) node(4){$B^2 = 4$};
\draw (6,0) node(not4){$B^2 \neq 4$};
\draw (4.5,0) node(chi+){{\tiny $\chi(B^2-4)=1$}};
\draw (7.5,0) node(chi-){{\tiny $\chi(B^2-4)=-1$}};
\draw (0,0)--(4)--(chi+);
\draw (chi-)--(9,0);
\draw (chi+)--(not4)--(chi-);
\draw (0.5+\x,-\g) node[rotate=45]{{\tiny $\chi(D^2-4F)=-1$}};
\draw (1.5+\x,-\g) node[rotate=45]{{\tiny $\chi(D^2-4F)=0$}};
\draw (2.5+\x,-\g) node[rotate=45]{{\tiny $\chi(D^2-4F)=1$}};
\draw (1.5,\h) node{\fbox{$p$}};
\draw (4.5,\h) node{\fbox{$p-1$}};
\draw (7.5,\h) node{\fbox{$p+1$}};
\draw (0.5,-\h) node{\fbox{$0$}};
\draw (1.5,-\h) node{\fbox{$p$}};
\draw (2.5,-\h) node{\fbox{$2p$}};
\draw (4.5,-\h) node{\fbox{$2p-1$}};
\draw (7.5,-\h) node{\fbox{$1$}};
%
%ellipse
\draw[blue] (7.5,2*\h) ellipse(0.5 and 0.309);
%ellipse degenerating to a point
\fill[blue] (7.5,-2*\h) ellipse (0.05 and 0.05);
%hyperbola
\draw[blue] (4.5-\x,2*\h) .. controls (4.5,2*\h) .. (4.5,2*\h-\x);
\draw[blue] (4.5+\x,2*\h) .. controls (4.5,2*\h) .. (4.5,2*\h+\x);
%intersecting lines
\draw[blue] (4.5-\x,-2*\h)--(4.5+\x,-2*\h);
\draw[blue] (4.5,-2*\h-\x)--(4.5,-2*\h+\x);
%parabola
\draw[blue] (1.5-\x,1.5*\h) .. controls (1.5,3*\h) .. (1.5+\x,1.5*\h);
%parallel lines meeting together
\draw[blue] (1.5,-2*\h-\x)--(1.5,-2*\h+\x);
%parallel lines
\draw[blue] (2.5-\s,-2*\h-\x)--(2.5-\s,-2*\h+\x);
\draw[blue] (2.5+\s,-2*\h-\x)--(2.5+\s,-2*\h+\x);
%parallel lines (imaginary)
\draw[blue,dashed] (0.5-\s,-2*\h-\x)--(0.5-\s,-2*\h+\x);
\draw[blue,dashed] (0.5+\s,-2*\h-\x)--(0.5+\s,-2*\h+\x);
\end{tikzpicture}
\caption{
The number of solutions to $x^2+Bxy+y^2+Dx+Ey+F=0$ in $\F_p$.
Top: the number is $p-\chi(B^2-4)$ for smooth conics, in terms of the quadratic character $\chi \bmod p$.
Bottom: if $D^2+E^2+FB^2-4F-BDE = 0$, there is a correction of $p$ times $\chi(B^2-4)$ or $\chi(D^2-4F)$.
Blue: analogous conic sections over the reals.
}
\label{fig:conics}
\end{figure}

Let us complete the square, starting from
\[
x^2+Bxy+y^2+Dx+Ey+F = 0.
\]
Assuming $B^2 \neq 4$, we get
\[
0= \Big(x + \frac{B}{2}y + \frac{D}{2} \Big)^2 + \frac{4-B^2}{4} \Big(y + \frac{2E-BD}{4-B^2} \Big)^2 + F - \frac{D^2}{4} - \frac{1}{4} \frac{(2E-BD)^2}{4-B^2}.
\]
After some simplifications, the conic becomes
\begin{equation}
  \begin{split}
&(2x+By+D)^2 - (B^2-4) \Big( y + \frac{2E-BD}{4-B^2} \Big)^2 = \\
&\frac{4(FB^2+D^2+E^2-EBD-4F)}{4-B^2}.
  \end{split} \label{eqn:conic}
\end{equation}
Assuming the constant on the right-hand side of \eqref{eqn:conic} is non-zero, Proposition~\ref{prop:count-conic} shows that the number of solutions is
\[
p-\chi(B^2-4).
\]
If the constant vanishes, instead the conic is a pair of lines (possibly imaginary, and not necessarily distinct).
The lines are given by
\[
2x+By+D = \pm \sqrt{B^2-4} \Big( y + \frac{2E-BD}{4-B^2} \Big).
\]
They intersect in a point with coordinates
\[
y= \frac{BD-2E}{4-B^2}, \quad x = -\frac{1}{2}\Big(D + \frac{B(BD-2E)}{4-B^2}\Big) = \frac{BE-2D}{4-B^2}.
\]
If $\chi(B^2-4)=-1$, then this intersection point is the only solution in $\F_p$.
If $\chi(B^2-4)=1$, then there are $2p-1$ solutions.

Assuming instead that $B^2=4$, completing the square gives
\[
(2x+By+D)^2 + 4\Big(E - \frac{B}{2}D\Big)y = D^2-4F.
\]
If $E-BD/2 \neq 0$, then this defines a parabola with $p$ points where any value of the linear form $2x+By+D$ gives a unique value for $y$.
This total $p$ is again of the form $p-\chi(B^2-4)$ in the special case $B^2=4$.
If $E-BD/2 =0$, then the conic degenerates to
\[
(2x+By+D)^2 = D^2-4F
\]
which is a line if $D^2-4F=0$ or two parallel lines if $D^2-4F \neq 0$.
In the latter case, there are either $0$ solutions over $\F_p$ if $\chi(D^2-4F)=-1$ or $2p$ solutions if $\chi(D^2-4F)=1.$
All three counts can be written together as $p+\chi(D^2-4F)p.$

The cases $B^2=4$ and $B^2 \neq 4$ have something in common: if $B^2 =4$, then
\[
D^2+E^2+FB^2-4F-BDE = \big(E - \frac{B}{2}D\big)^2
\]
so the further case distinction can be seen in a unified way.
The number of points on the conic is
\[
p-\chi(B^2-4) + \mathbbm{1}[D^2+E^2+FB^2-4F-BDE=0] \cdot p \cdot \begin{cases}\chi(B^2-4) \\ \chi(D^2-4F) \end{cases}
\]
where the case-wise final term is $\chi(D^2-4F)$ if $B^2-4 = 0$ and $E=\frac{B}{2}D$, and $\chi(B^2-4)$ otherwise.

If $F=1$, then $D^2+E^2+B^2-4-BDE$ is of course Cayley's cubic.

\begin{proof}[Proof of Proposition~\ref{prop:numel}]
The total is
\[
\# (x_1,x_2,x_3) = \sum_{x_3} \#(x_1,x_2)
\]
where for each $x_3$, we count $(x_1,x_2)$ on a conic of the form (\ref{eqn:conic-bdef}) with the parameters from (\ref{eqn:bdef}).
Explicitly, (\ref{eqn:family}) becomes
\[
x_1^2 + (a_3-sx_3)x_1x_2 + x_2^2 + a_2x_3 x_1 + a_1x_3x_2 + x_3^2 = 0
\]
with $B=a_3-sx_3$. For any $s \neq 0$, we may as well change variables from $x_3$ to $B$.
The number of solutions is therefore
\[
\sum_{B \in \F_p} \big( p-\chi(B^2-4) \big) + p \sideset{}{'}\sum_B \begin{cases}\chi(B^2-4) \\ \chi(D^2-4F) \end{cases}
\]
where $\sum_B^{'}$ runs over solutions to $D^2+E^2+FB^2-4F-BDE=0$.

The first sum gives
\[
\sum_{B \in \F_p} \big( p-\chi(B^2-4) \big) = p^2 + 1
\]
by (\ref{eqn:shifted-squares}), which becomes $p^2$ after discounting $(0,0,0)$.

In the second sum, we have
\[
D^2+E^2+FB^2-4F-BDE=x_3^2 \big(sx_3 (sx_3 + a_{1}a_{2}-2a_3) + a_1^2+a_2^2+a_3^2-a_1a_2a_3-4 \big)
\]
where $x_3=0$ for $B=a_3$. The second factor is
\[
\big(B-a_3\big)\big(B-(a_{1}a_{2}-a_3)\big)+a_1^2+a_2^2+a_3^2-a_1a_2a_3-4
\]
which vanishes if
\[
B^2 - a_{1}a_{2}B + a_{1}^2+a_{2}^2-4 = 0.
\]
That leaves up to three terms in $\sideset{}{'}\sum_B$,
namely
\[
B=a_3, \quad B=t_+, \quad B=t_-
\]
where
\[
t_{\pm} = \frac{a_1a_2 \pm \sqrt{a_1^2a_2^2 - 4(a_1^2+a_2^2-4)}}{2} = \frac{a_1a_2 \pm \sqrt{(a_1^2-4)(a_2^2-4)}}{2}
\]
which must be excluded if there is no such square root in $\F_p$.
This gives a variant of Proposition~\ref{prop:numel} with
\[
C(a_1,a_2,a_3)= -\sum_i \chi(a_i^2-4) + \sum_{B \in \{a_3,t_+,t_-\} \ \cap \ \F_p } \begin{cases} \chi(B^2-4) \\ \chi(D^2-4F) \end{cases}
\]
which simplifies as claimed in a case-by-case manner depending on the sign of $\chi(a_1^2-4)\chi(a_2^2-4)$. We note here that the degenerate case of $a_3=t_{\pm}$ is equivalent to the statement that $(a_1,a_2,a_3)$ lies on the Cayley cubic to highlight its role in the overall point count for (\ref{eqn:family}).

It remains to determine whether there are one, two, or three terms and whether each contributes $\chi(B^2-4)$ or $\chi(D^2-4F)$.
Each term contributes $\chi(B^2-4)$ unless $B^2=4$.
If $B^2=4$, then
\[
0=D^2+E^2+F(B^2-4)-BDE \implies E-BD/2=0.
\]
With $D=a_1 x_3$ and $E=a_2 x_3$, it must be that either $a_1=\pm a_2$ or $x_3=0$.
We have $x_3=0$ if and only if $B=a_3$, so this is only possible when $a_3^2=4$.
Thus all summands are $\chi(B^2-4)$ for generic parameters $(a_1,a_2,a_3)$.
Let us first write the proof assuming all summands are $\chi(B^2-4)$ and then indicate how to include possible terms $\chi(D^2-4F)$.

We will use one more observation to simplify the final answer.
The equation $t^2-a_1a_2t + a_1^2+a_2^2-4=0$ can equally well be solved for $a_1$ or $a_2$ in terms of either of the roots $t_{\pm}$.
Whereas $t_{\pm}$ could lie in an extension, we know $a_1$ and $a_2$ lie in $\F_p$.
It follows that
\begin{equation} \label{eqn:symmetriser}
\chi(a_1^2-4)\chi(t^2-4) \neq -1,  \quad \chi(a_2^2-4)\chi(t^2-4) \neq -1
\end{equation}
which will give the formula as claimed with symmetry between $(a_1,a_2,a_3)$ restored.

There are three cases depending on whether $\chi(a_1^2-4)\chi(a_2^2-4)$ is positive, negative, or zero.
Consider first the case
\begin{equation} \label{eqn:case-}
\chi\big( (a_1^2-4)(a_2^2-4) \big) = -1
\end{equation}
hence there is only a single term $B=a_3$ in $\sideset{}{'}\sum_B$.
If it contributes $\chi(B^2-4)$, rather than $\chi(D^2-4F)$, then the total is
\[
\chi(a_3^2-4) = \sum_i \chi(a_i^2-4)
\]
because $\chi(a_1^2-4)+\chi(a_2^2-4)=0$.
From the sign condition (\ref{eqn:case-}), it also follows that $(a_1,a_2,a_3)$ is not on the Cayley cubic.
Thus the proposition holds in this case.

\iffalse
The contribution $\chi(D^2-4F)$ occurs only when $B^2=4$, that is, $a_3^2=4$.
In the original variables, $B=a_3$ means $x_3=0$.
Thus $D^2-4F=0$ and it makes no difference whether one uses $B^2-4$ or $D^2-4F$.
The proposition holds as above.
\fi

Next consider
\begin{equation} \label{eqn:case0}
    (a_1^2-4)(a_2^2-4)=0.
\end{equation}
In this case,
\[
t_+ = t_- = \frac{a_1a_2}{2}
\]
so there could be two values if $a_3 \neq \frac{a_1a_2}{2}$ or a single value if $(a_1,a_2,a_3)$ lies on the Cayley cubic.
If there are two values, then
\[
\sideset{}{'}\sum_B \chi(B^2-4) = \chi(a_3^2-4) + \chi\Big(\frac{a_1^2a_2^2}{4}-4\Big).
\]
Let us say $a_1^2=4$, the argument being the same if $a_2^2=4$.
Then $\frac{a_1^2a_2^2}{4}-4=a_2^2-4$ and $\chi(a_1^2-4)=0$ so
\[
\sideset{}{'}\sum_B \chi(B^2-4) = \chi(a_3^2-4) + \chi\Big(\frac{a_1^2a_2^2}{4}-4\Big) = \sum_i \chi(a_i^2-4).
\]
This proves the proposition assuming $a_3 \neq \frac{a_1a_2}{2}$.
If $a_3 = \frac{a_1a_2}{2}$, then there is just one term, and
\[
\sideset{}{'}\sum_B \chi(B^2-4) = \chi(a_3^2-4) = -\chi(a_2^2-4)+\sum_i \chi(a_i^2-4)
\]
which also agrees with the proposition.

Finally, suppose
\begin{equation} \label{eqn:case+}
\chi\big( (a_1^2-4)(a_2^2-4) \big) = 1.
\end{equation}
The sum either involves three distinct values $a_3, t_+, t_-$ or, in the Cayley case, two values $a_3$ and $a_1a_2-a_3$.
Suppose there are three.
Using (\ref{eqn:symmetriser}), and assuming $a_i^2-4$ and $t_{\pm}^2-4$ do not vanish, we conclude that $\chi(a_1^2-4)$ and $\chi(t^2-4)$ have the same sign (and similarly for $a_2$).
Therefore
\[
\sideset{}{'}\sum_B \chi(B^2-4) = \chi(a_3^2-4) + \chi(t_+^2-4)+\chi(t_-^2-4) = \sum_{i} \chi(a_i^2-4)
\]
as required.
If there are only two terms, then using (\ref{eqn:symmetriser}) as before, we find
\[
\sideset{}{'}\sum_B \chi(B^2-4) = \chi(a_3^2-4)+\chi(t^2-4)=-\chi(a_1^2-4)+\sum_i \chi(a_i^2-4)
\]
where the correction is again as claimed.

To complete the proof, it remains only to consider the possibility of summands $\chi(D^2-4F)$ instead of $\chi(B^2-4)$.
These arise only when $B^2=4$ and $E= \frac{B}{2}D$, that is,
\[
a_2 x_3 = \frac{B}{2} a_1 x_3 = \pm a_1 x_3.
\]
If $x_3=0$, then $B=a_3-sx_3=a_3$ so $a_3^2=4$ and $D=E=F=0$ meaning $\chi(B^2-4)=\chi(D^2-4F)=0$. This shows that either summand gives the same result for $B=a_3$.

If $B^2=4$ for one of the other possible terms $B=t_{\pm}$, then $x_3 \neq 0$ so $a_2 = \frac{B}{2}a_1$. Therefore
\[
D^2-4F = (a_1^2-4) \Big( \frac{B-a_3}{s} \Big)^2.
\]
It follows that
\[
\chi(D^2-4F) = \begin{cases} \chi(a_1^2-4) &\text{if} \ B \neq a_3 \\
0 &\text{if} \ B=a_3
\end{cases}
\]
and likewise for $\chi(a_2^2-4)$ since $a_1^2=a_2^2$ in this scenario.

The proposition follows as before by considering the different sign possibilities (\ref{eqn:case-}), (\ref{eqn:case0}), and (\ref{eqn:case+}).
For example, in case (\ref{eqn:case+}),
\[
\sideset{}{'}\sum_B \chi(D^2-4F) = \sum_i \chi(a_i^2-4)
\]
with $\chi(a_3^2-4)$ from $B=a_3$ and the remaining terms from $B=t_{\pm}$.
\end{proof}

\section{Counterexamples} \label{sec:counter}

In this section, we illustrate how divisibility by $p$ can fail if the hypotheses of Theorem~\ref{thm:congruence-lower-bound} are not satisfied.
If $s=0$, then (\ref{eqn:family}) may lead to highly disconnected graphs with many orbits of size not divisible by $p$.
Likewise for parameters such as $(2,2,-2)$ violating (\ref{eqn:hypo}) there are many small orbits as shown in Table~\ref{table:22-2}.

What makes the proofs break down in these examples is the presence of double fixed points $x=m_{i-1}x=m_{i+1}x$ at some $x$ with $x_i=0$.
At such a triple, (\ref{eqn:delta-pair}) forces $\Delta_{i-1}(x)=\Delta_{i+1}(x)=s/2$ and then (\ref{eqn:delta-total}) forces $\Delta_i(x) = 0$.
This may turn out to be inconsistent with other instances of (\ref{eqn:delta-total}) and (\ref{eqn:delta-pair}) at nearby points, so that there is no well-defined extension of the angle functions from (\ref{eqn:delta}) to triples with vanishing coordinates.

\subsection{Example: $(2,2,-2)$}

The sign condition (\ref{eqn:hypo}) is necessary in Theorem~\ref{thm:congruence-lower-bound}.
If $s \neq 0$ and $a_1=a_2=2$ but $a_3=-2$, then there are orbits of size 1, orbits of size 2, orbits of size 4, and indeed many other sizes. For singleton orbits, there are always three given by $(4s^{-1},4s^{-1},0)$, $(4s^{-1},0,-4s^{-1})$, and $(0,4s^{-1},-4s^{-1})$. In the case of orbits of size 2, there are three ``barbells" formed by two double fixed points connected by a move on the non-fixed coordinate as seen in Figure~\ref{fig:size2}. %\\

\begin{figure}[h]
\centering
    \begin{tikzpicture}
\pgfmathsetmacro{\r}{2.5}
%draw vertices
\draw (0,0) node(1){$(0,2s^{-1},-2s^{-1})$};
\draw (4,0) node(2){$(4s^{-1},2s^{-1},-2s^{-1})$};
\draw (0,-1) node(3){$(2s^{-1},0,-2s^{-1})$};
\draw (4,-1) node(4){$(2s^{-1},4s^{-1},-2s^{-1})$};
\draw (0.14,-2) node(5){$(2s^{-1},2s^{-1},0)$};
\draw (4,-2) node(6){$(2s^{-1},2s^{-1},-4s^{-1})$};
\filldraw[black] (6.6,0) circle (0pt) node[anchor=west]{\tiny $m_2,m_3$};
\filldraw[black] (6.6,-1) circle (0pt) node[anchor=west]{\tiny $m_1,m_3$};
\filldraw[black] (6.6,-2) circle (0pt) node[anchor=west]{\tiny $m_1,m_2$};
\filldraw[black] (-3.5,0) circle (0pt) node[anchor=west]{\tiny $m_2,m_3$};
\filldraw[black] (-3.5,-1) circle (0pt) node[anchor=west]{\tiny $m_1,m_3$};
\filldraw[black] (-3.2,-2) circle (0pt) node[anchor=west]{\tiny $m_1,m_2$};
%draw edges
\draw (1) edge["\tiny $m_1$"] (2);
\draw (3) edge["\tiny $m_2$"] (4);
\draw (5) edge["\tiny $m_3$"] (6);
%draw ends
\draw (-1.47,0.1) arc (15:345:.4);
\draw (-1.47,-0.9) arc (15:345:.4);
\draw (-1.17,-1.9) arc (15:345:.4);
\draw (5.76,0.1) arc (165:-165:.4);
\draw (5.76,-0.9) arc (165:-165:.4);
\draw (5.76,-1.9) arc (165:-165:.4);
\end{tikzpicture}
\caption{Orbits of size 2 for $a=(2,2,-2)$ and $s \neq 0$.}
\label{fig:size2}
\end{figure}
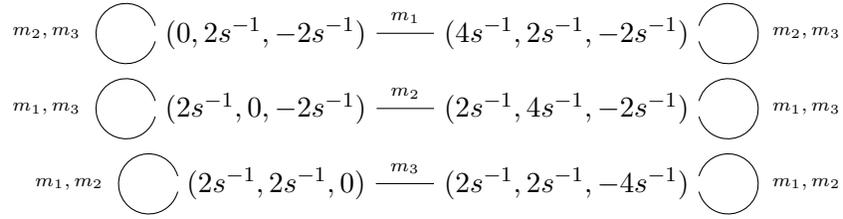

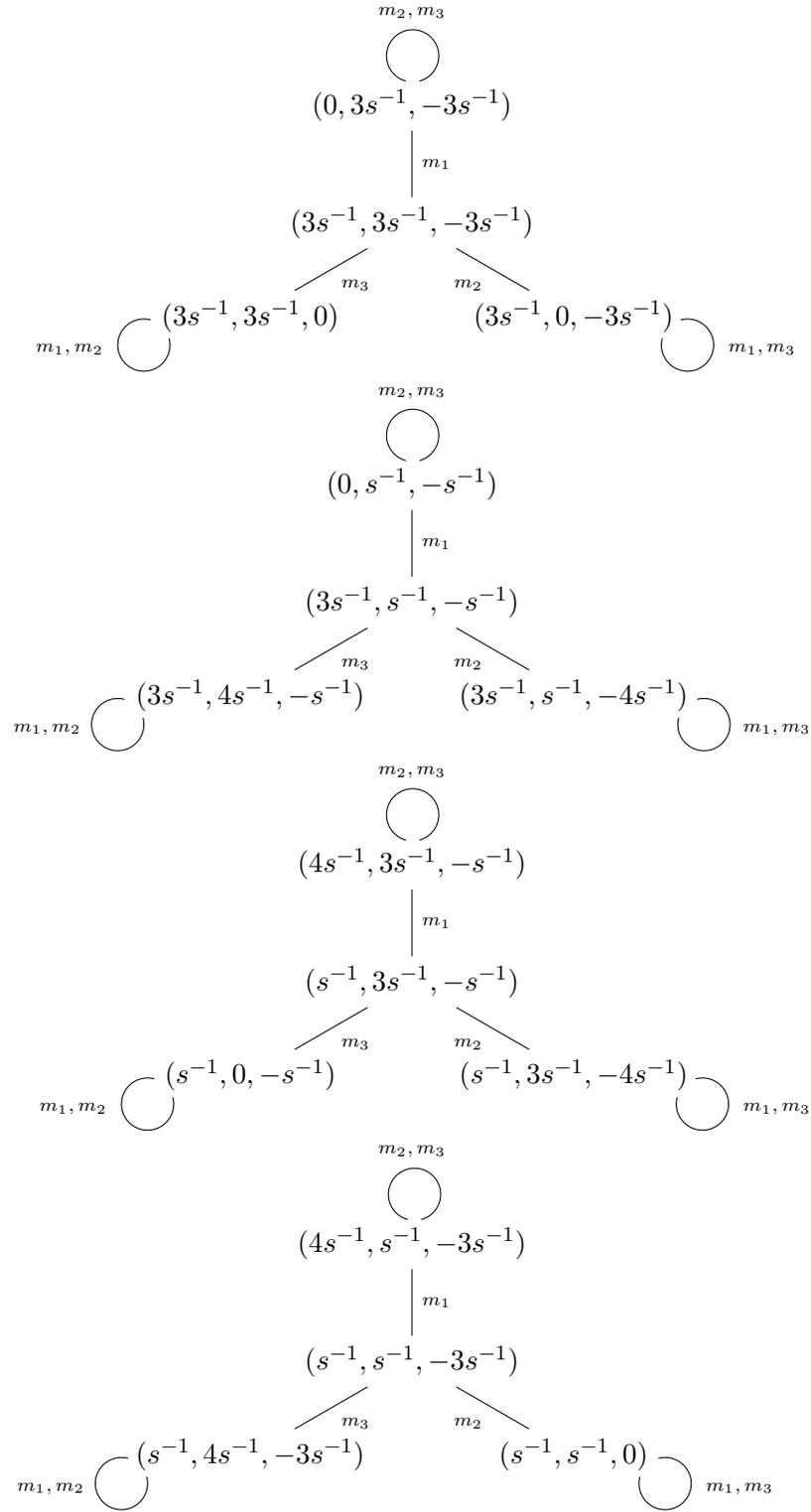
\begin{figure}
\centering 
\begin{tikzpicture}
\draw (0,0)++(90:1.58) node(2){$(0,3s^{-1},-3s^{-1})$};
\draw (0,0) node(1){$(3s^{-1},3s^{-1},-3s^{-1})$};
\draw (0,0)++(210:2.5) node(3){$(3s^{-1},3s^{-1},0)$};
\draw (0,0)++(330:2.5) node(4){$(3s^{-1},0,-3s^{-1})$};
\filldraw[black] (0,2.6) circle (0pt) node[anchor=south]{\tiny $m_2,m_3$};
\filldraw[black] (4.1,-1.7) circle (0pt) node[anchor=west]{\tiny $m_1,m_3$};
\filldraw[black] (-4,-1.7) circle (0pt) node[anchor=east]{\tiny $m_1,m_2$};
%edges
\draw (2) edge["\tiny $m_1$"] (1);
\draw (1) edge["\tiny $m_3$"] (3);
\draw (4) edge["\tiny $m_2$"] (1);
\draw (0.1,1.9) arc (-75:255:.35);
\draw (-3.5,-1.3) arc (75:375:.35);
\draw (3.6,-1.3) arc (105:-195:.35);
\filldraw[white] (6,0) circle (0pt);
\filldraw[white] (-6,0) circle (0pt);
    \end{tikzpicture}

    \begin{tikzpicture}
\draw (0,0)++(90:1.58) node(2){$(0,s^{-1},-s^{-1})$};
\draw (0,0) node(1){$(3s^{-1},s^{-1},-s^{-1})$};
\draw (0,0)++(210:2.5) node(3){$(3s^{-1},s^{-1},-4s^{-1})$};
\draw (0,0)++(330:2.5) node(4){$(3s^{-1},4s^{-1},-s^{-1})$};
\filldraw[black] (0,2.6) circle (0pt) node[anchor=south]{\tiny $m_2,m_3$};
\filldraw[black] (4.3,-1.7) circle (0pt) node[anchor=west]{\tiny $m_1,m_3$};
\filldraw[black] (-4.3,-1.7) circle (0pt) node[anchor=east]{\tiny $m_1,m_2$};
%edges
\draw (2) edge["\tiny $m_1$"] (1);
\draw (1) edge["\tiny $m_3$"] (3);
\draw (4) edge["\tiny $m_2$"] (1);
\draw (0.1,1.9) arc (-75:255:.35);
\draw (-3.85,-1.3) arc (75:375:.35);
\draw (3.82,-1.3) arc (105:-195:.35);
\filldraw[white] (6,0) circle (0pt);
\filldraw[white] (-6,0) circle (0pt);
    \end{tikzpicture}

    \begin{tikzpicture}
\draw (0,0)++(90:1.58) node(2){$(4s^{-1},3s^{-1},-s^{-1})$};
\draw (0,0) node(1){$(s^{-1},3s^{-1},-s^{-1})$};
\draw (0,0)++(210:2.5) node(3){$(s^{-1},3s^{-1},-4s^{-1})$};
\draw (0,0)++(330:2.5) node(4){$(s^{-1},0,-s^{-1})$};
\filldraw[black] (0,2.6) circle (0pt) node[anchor=south]{\tiny $m_2,m_3$};
\filldraw[black] (3.95,-1.7) circle (0pt) node[anchor=west]{\tiny $m_1,m_3$};
\filldraw[black] (-4.3,-1.7) circle (0pt) node[anchor=east]{\tiny $m_1,m_2$};
%edges
\draw (2) edge["\tiny $m_1$"] (1);
\draw (1) edge["\tiny $m_3$"] (3);
\draw (4) edge["\tiny $m_2$"] (1);
\draw (0.1,1.9) arc (-75:255:.35);
\draw (-3.8,-1.3) arc (75:375:.35);
\draw (3.45,-1.3) arc (105:-195:.35);
\filldraw[white] (6,0) circle (0pt);
\filldraw[white] (-6,0) circle (0pt);
    \end{tikzpicture}

        \begin{tikzpicture}
\draw (0,0)++(90:1.58) node(2){$(4s^{-1},s^{-1},-3s^{-1})$};
\draw (0,0) node(1){$(s^{-1},s^{-1},-3s^{-1})$};
\draw (0,0)++(210:2.5) node(3){$(s^{-1},s^{-1},0)$};
\draw (0,0)++(330:2.5) node(4){$(s^{-1},4s^{-1},-3s^{-1})$};
\filldraw[black] (0,2.6) circle (0pt) node[anchor=south]{\tiny $m_2,m_3$};
\filldraw[black] (4.25,-1.7) circle (0pt) node[anchor=west]{\tiny $m_1,m_3$};
\filldraw[black] (-3.8,-1.7) circle (0pt) node[anchor=east]{\tiny $m_1,m_2$};
%edges
\draw (2) edge["\tiny $m_1$"] (1);
\draw (1) edge["\tiny $m_3$"] (3);
\draw (4) edge["\tiny $m_2$"] (1);
\draw (0.125,1.9) arc (-75:255:.35);
\draw (-3.3,-1.3) arc (75:375:.35);
\draw (3.8,-1.3) arc (105:-195:.35);
\filldraw[white] (6,0) circle (0pt);
\filldraw[white] (-6,0) circle (0pt);
    \end{tikzpicture}
\caption{
Orbits of size 4 for $a=(2,2,-2)$ and $s\neq 0$.
}
    \label{fig:size4}
\end{figure}

As for orbits of size 4, there are four ``tripods" of the same shape; one central triple connected to three double fixed points. In fact, these four tripods can be broken into two classes, one class of a single orbit where all three of the double fixed points contain one coordinate equal to 0 and another class containing three orbits where only one double fixed point contains a 0 coordinate. These can be seen in Figure~\ref{fig:size4}.

One thing to note is that all of these examples of tiny orbits of size 1, 2, and 4 are defined even when investigating Equation~(\ref{eqn:family}) over characteristic 0. Their existence is not governed by the specific choice of prime but rather by the global behaviour of solutions over $\overline{\mathbb{Q}}$. Table~\ref{table:22-2} contains the sizes of all non-trivial orbits for $a=(2,2,-2)$ and $p \leq 43$. However for the case of $p=5$, it follows that $s=5$ so the tiny orbits of size 1, 2, and 4 described above do not occur. For a similar breakdown of examples of small orbits over $\overline{\mathbb{Q}}$ in a family of Markoff-type K3 surfaces, see \cite{FLST}.

\begin{table}[h!]
    \centering
    \begin{tabular}{c|l}
        $p$ & Sizes of Orbits \\ \hline 
        2 & 4 \\
        3 & $1^3$, $2^3$ \\
        5 & $12^2$ \\
        7 & $1^3$, $2^3$, $4^3$, $8^3$ \\
        11 & $1^3$, $2^3$, $4^3$, $12^4$, $16^3$ \\
        13 & $1^3$, $2^3$, $4^4$, $16^3$, $24^4$  \\
        17 & $1^3$, $2^3$, $4^4$, $8^3$, $32^3$, $36^4$ \\
    \end{tabular}
    \qquad \qquad
    \begin{tabular}{c|l}
        $p$ & Sizes of Orbits \\  \hline 
        19 & $1^3$, $2^3$, $4^3$, $12^4$, $36^4$, $48^3$ \\
        23 & $1^3$, $2^3$, $4^3$,$8^3$, $16^3$, $60^4$, $64^3$ \\
        29 & $1^3$, $2^3$, $4^3$, $12^4$, $24^4$, $96^7$ \\
        31 & $1^3$, $2^3$, $4^3$, $8^3$, $12^4$, $32^3$, $96^4$, $128^3$\\
        37 & $1^3$, $2^3$, $4^3$, $16^3$, $36^4$, $144^3$, $180^4$ \\
        41 & $1^3$, $2^3$, $4^3$, $8^3$, $12^4$, $24^4$, $48^3$, $192^7$  \\
        43 & $1^3$, $2^3$, $4^3$, $24^4$, $60^4$, $192^4$, $240^3$ \\
    \end{tabular}
    
    \caption{For each prime $p$, we calculate the orbits for (\ref{eqn:family}) with $a_1=2$, $a_2=2$, $a_3=-2$. Entry $c^d$ indicates there are $d$ components of size $c$.}
    \label{table:22-2}
\end{table}

\subsection{Example: $(0,0,-3)$}
An especially interesting example is to take
\[
a_1=a_2=0, a_3=-3.
\]
Modulo 3, this choice would make all parameters $a_1=a_2=a_3=0$ and the moves would simply be sign changes $x_i \mapsto -x_i$.
The resulting graph is a cube as drawn in Figure~\ref{fig:mod3}.
The number of non-zero triples is 8, not a multiple of 3.
Although there is just one orbit for $p=3$, there can be many for larger primes.

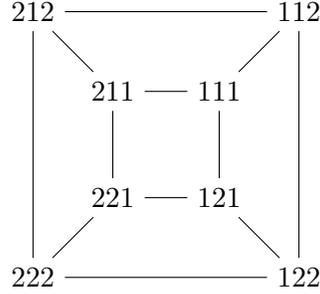
\begin{figure}
\begin{tikzpicture}
\pgfmathsetmacro{\r}{2.5}
%draw vertices
\draw (0,0)++(45:1) node(111){111};
\draw (0,0)++(135:1) node(211){211};
\draw (0,0)++(-45:1) node(121){121};
\draw (0,0)++(-135:1) node(221){221};
\draw (0,0)++(45:\r) node(112){112};
\draw (0,0)++(135:\r) node(212){212};
\draw (0,0)++(-45:\r) node(122){122};
\draw (0,0)++(-135:\r) node(222){222};
%draw edges
\draw (111)--(211)--(221)--(121)--(122)--(112)--(212)--(222)--(122);
\draw (211)--(212);
\draw (111)--(112);
\draw (221)--(222);
\draw (111)--(121);
\end{tikzpicture}
\caption{
The solutions to $x^2+y^2+z^2=0 \bmod 3$.
}
\label{fig:mod3}
\end{figure}

The move on $i=3$ is a sign change
\[
m_3: x_3 \mapsto -x_3
\]
which commutes with the other two moves.
They are given by
\begin{align}
m_1: x_1 &\mapsto -x_1 + 3x_2 \\
m_2: x_2 &\mapsto -x_2 + 3x_1
\end{align}
Scaling $x \mapsto tx$ also commutes with all three moves.
It is therefore enough to understand the action on the two conics $(*,*,1)$ and $(*,*,-1)$, which are linked by $m_3$, and separately the action on $(*,*,0)$.
Overall, there may be several orbits for $(*,*,0)$, several orbits for $(*,*,\pm 1)$, and $\frac{p-1}{2}$ copies of the latter for $(*,*,\pm t)$ with $t \neq 0$.

In matrix form, the moves act by
\[
m_1 = \begin{pmatrix} -1 & 3 \\ 0 & 1 \end{pmatrix}, \quad m_2 = \begin{pmatrix} 1 & 0 \\ 3 & -1 \end{pmatrix}
\]
acting on $\begin{pmatrix} x_1 \\ x_2 \end{pmatrix}$.
Their product is
\[
m_1m_2 = \begin{pmatrix} 8 & -3 \\ 3 & -1 \end{pmatrix}, \quad m_2m_1 = (m_1m_2)^{-1} = \begin{pmatrix} -1 & 3 \\ -3 & 8 \end{pmatrix}
\]
with characteristic equation
\[
\lambda^2 - 7\lambda + 1 = 0.
\]
Typically there are two eigenvalues
\begin{equation}
\lambda = \frac{7+3\sqrt{5}}{2}
\end{equation}
for two choices of $\sqrt{5}$, either in $\F_p$ or a quadratic extension. By considering $\theta = \frac{3+\sqrt{5}}{2}$ which lives in the same field as $\lambda$, we have
    \begin{align*}
        \theta^2 = 3\theta -1 = \lambda
    \end{align*}
which implies the order of $\lambda$ is a divisor of $\dfrac{p\pm1}{2}$, not just $p\pm1$.

\begin{proposition} \label{prop:00-3} Let $p>5$ be prime.
\begin{enumerate}
    \item The number of orbits for $m_1$ and $m_2$ acting on the conic $(*,*,1)$ is
\[
\frac{1}{2} \frac{p\pm 1}{\ord(\lambda)} + \begin{cases} 1 & \mathrm{if}\ \sqrt{5} \in \F_p \\
    0 & \mathrm{if}\ \sqrt{5} \not\in \F_p \end{cases}\ ;
\]
    \item The number of orbits for $m_1$ and $m_2$ acting on the conic $(*,*,0)$ is
\[
\begin{cases}
    \dfrac{p - 1}{\ord(\lambda)} & \mathrm{if}\ \sqrt{5} \in \F_p \\
    0 & \mathrm{if}\ \sqrt{5} \not\in \F_p
\end{cases}
\]

\end{enumerate}
where $\lambda = \frac{7+3\sqrt{5}}{2}$, with multiplicative order $\ord(\lambda)$ in $\F_{p^2}^{\times}$ dividing $\frac{p-1}2$ if $\lambda \in \F_p^{\times}$ or $\frac{p+1}2$ if $\lambda$ lies in a quadratic extension.
\end{proposition}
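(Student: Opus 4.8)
The plan is to reduce everything to the geometry of the binary quadratic form $Q(x_1,x_2)=x_1^2-3x_1x_2+x_2^2$ (of discriminant $5$) together with the cyclic group generated by $\lambda$. Since $m_1$ and $m_2$ fix $x_3$, they preserve each slice $\{x_3=c\}$, which is the conic $Q(x_1,x_2)=-c^2$. Completing the square turns $Q$ into $u^2-5v^2$ under an invertible $\F_p$-linear substitution (for instance $x_1=u+3v$, $x_2=2v$); in the new coordinates one computes directly that $m_1$ is the reflection $(u,v)\mapsto(-u,v)$ while $\rho=m_1m_2$ is the ``hyperbolic rotation'' $\bigl(\begin{smallmatrix} a & 5b\\ b & a\end{smallmatrix}\bigr)$ with $a+b\sqrt5=\lambda=\tfrac{7+3\sqrt5}{2}$ and $a^2-5b^2=1$. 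I would then pass to the \'etale $\F_p$-algebra $K=\F_p[\sqrt5]\cong\F_p[t]/(t^2-5)$ --- a field $\F_{p^2}$ when $\chi(5)=-1$ and the split ring $\F_p\times\F_p$ when $\chi(5)=1$ --- with conjugation $\bar{\cdot}$ sending $\sqrt5\mapsto-\sqrt5$ and norm $N(z)=z\bar z$. Under $(u,v)\leftrightarrow u+v\sqrt5$ the conic $\{x_3=c\}$ becomes $\{z\in K^\times:N(z)=-c^2\}$, the group $\mathcal G=\{z:N(z)=1\}$ is cyclic of order $p-\chi(5)$, the map $\rho$ acts by $z\mapsto\lambda z$ with $\lambda=\theta^2\in\mathcal G$ for $\theta=\tfrac{3+\sqrt5}{2}$, and $m_1$ acts by $z\mapsto-\bar z$. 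Throughout, $G:=\langle m_1,m_2\rangle$ is dihedral of order exactly $2N$ with $N=\ord(\lambda)$: it is generated by the involution $m_1$ and the rotation $\rho$ with $m_1\rho m_1=\rho^{-1}$, the matrix $\rho$ is semisimple with distinct eigenvalues $\lambda\neq\lambda^{-1}$ for $p>5$, and $m_1\notin\langle\rho\rangle$ since $\det m_1=-1$.

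For part (2), the conic $\{x_3=0\}$ is $u^2-5v^2=0$: a pair of lines through the origin if $\chi(5)=1$, and just the origin if $\chi(5)=-1$. In the latter case there are no non-trivial solutions, giving $0$ orbits. When $\chi(5)=1$, the two lines are exactly the eigenlines of $\rho$, so $\rho$ preserves each and acts on the punctured line $\cong\F_p^\times$ by multiplication by $\lambda^{\pm1}$, while $m_1$ interchanges the two lines (it conjugates $\rho$ to $\rho^{-1}$). Hence each $G$-orbit meets one of the lines in a single $\langle\rho\rangle=\langle\lambda\rangle$-coset, and the number of orbits equals the number of such cosets, $\tfrac{p-1}{\ord(\lambda)}$.

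For part (1), I would run Burnside's lemma for $G\curvearrowright C_1$, where $C_1=\{z\in K^\times:N(z)=-1\}$ has $|C_1|=p-\chi(5)$ points (the standard smooth-conic count $p-\chi(B^2-4)$ with $B^2-4=5$; note $C_1\neq\emptyset$ for $p>5$). A fixed point of a non-trivial rotation $\rho^k$ ($1\le k\le N-1$) would be an eigenvector of $\rho^k$, but the eigenlines of $\rho$ lie in $\{Q=0\}$, which is disjoint from $C_1$, and when $N$ is even $\rho^{N/2}=-I$ fixes no non-zero vector; so each $\rho^k$ acts freely. A point $z\in C_1$ is fixed by the reflection $m_1\rho^k$ iff $z=-\lambda^{-k}\bar z$, i.e.\ (using $\bar z=-z^{-1}$ on $C_1$) iff $z^2=\lambda^{-k}=\theta^{-2k}$; the solutions are $z=\pm\theta^{-k}$, together with their two companions built from the idempotents in the split case, and a short norm computation shows that exactly $1+\chi(5)$ of them satisfy $N(z)=-1$ (when $\chi(5)=-1$, $N(\pm\theta^{-k})=\theta^{-k(p+1)}=1\neq-1$; when $\chi(5)=1$, two of the four square roots of $\theta^{-2k}$ in $\F_p\times\F_p$ have norm $-1$). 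Burnside then gives
\[
\#\{\text{orbits}\}=\frac{1}{2N}\Bigl((p-\chi(5))+N\bigl(1+\chi(5)\bigr)\Bigr),
\]
which equals $\tfrac12\tfrac{p-1}{\ord(\lambda)}+1$ if $\chi(5)=1$ and $\tfrac12\tfrac{p+1}{\ord(\lambda)}$ if $\chi(5)=-1$; since $\lambda=\theta^2$ and $\ord(\theta)$ divides $p\mp1$, the order $\ord(\lambda)$ divides $\tfrac{p\mp1}{2}$, so these are indeed integers, and the case split matches the statement via $\chi(5)=1\iff\sqrt5\in\F_p\iff\lambda\in\F_p^\times$.

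The main obstacle I anticipate is keeping the split case $\sqrt5\in\F_p$ under control: there $K=\F_p\times\F_p$ is not a field, ``conjugation'' is not a field automorphism, and $z^2=\lambda^{-k}$ acquires four solutions rather than two, of which one must isolate the half lying on $C_1$. Routing both cases through the single \'etale algebra $K=\F_p[\sqrt5]$ and its norm should keep the two regimes parallel, but the care needed to verify that $G$ really has order $2N$ (with no accidental collapsing for small $N$) and that the rotation subgroup acts freely --- both relying on $p>5$ so that $\lambda$ is distinct from $\lambda^{\pm1}$ and from $1$ --- is where the argument must be written out precisely.
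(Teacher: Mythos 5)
Your proposal is correct and follows essentially the same route as the paper: both identify $\langle m_1,m_2\rangle$ as a dihedral group of order $2\ord(\lambda)$ and apply the Burnside--Cauchy--Frobenius count, with the non-trivial rotations acting freely and each reflection fixing $1+\chi(5)$ points of the slice $x_3=1$. The differences are only in packaging: you route the fixed-point computation through the norm-one torus of $\F_p[\sqrt{5}]$ where the paper solves $5x_2^2=9$ directly, and for the slice $x_3=0$ you substitute a direct eigenline/coset count for the paper's Burnside argument (which instead rules out reflection fixed points via $a_3^2=9\neq 4$).
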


It follows from quadratic reciprocity that there is a $\sqrt{5}$ in $\F_p$ if and only if $p \equiv 1$ or $4 \bmod 5$.

\begin{proof}
Recall the lemma of Burnside--Cauchy--Frobenius: the number of orbits for an action of a finite group is equal to the average number of fixed points \cite{Burnside,Cauchy,Frobenius}.
We apply this to the group generated by $m_1$ and $m_2$ acting on the conics $(*,*,1)$ and $(*,*,0)$.

The moves $m_1$ and $m_2$ generate a dihedral group of order $2\ord(\lambda)$, where $m_1m_2$ acts as a rotation and $m_2$ acts as a reflection.
In both cases the identity fixes all elements of the conic, where in the $(*,*,1)$ situation there are $p\pm1$ points and in the $(*,*,0)$ situation there are $2(p-1)$ or 0 points depending on whether or not $\sqrt{5} \in \F_p$. In both scenarios, the non-trivial rotations fix nothing. On the conic $(*,*,1)$, each reflection has either $2$ or $0$ fixed points, depending on whether the quadratic $5x_2^2-9$ has roots in $\F_p$.

Therefore the total number of fixed points for $(*,*,1)$ is
\[
p\pm 1 + \ord(\lambda) \cdot 0 + \ord(\lambda) \cdot \begin{cases} 2 & \mathrm{if}\ \sqrt{5} \in \F_p \\
    0 & \mathrm{if}\ \sqrt{5} \not\in \F_p \end{cases}  
\]
and dividing by $2\ord(\lambda)$ gives the number of orbits as claimed.

On the conic $(*,*,0)$, each reflection has 0 fixed points. This is because each triple is already fixed by $m_3$. By Lemma~\ref{lemma:N=1 iff a_i^2=4}, the existence of a double fixed point on a triple with third coordinate equal to 0 would imply $a_3^2=4$, which is not true if $p>5$ since $a_3^2=9$.
The result follows since the total number of fixed points is $0$ if $\sqrt{5} \not \in \F_p$ or $2(p-1)$ otherwise.
\end{proof}

Proposition~\ref{prop:00-3} implies that the conic $(*,*,0)$, if non-empty, always breaks into at least two pieces. As a concrete example where the $(*,*,\pm1)$ pieces decompose further, consider $p=89$. Taking $z=\pm 1$ gives the conic
\[
x^2+y^2-3xy +1 = 0
\]
which is a hyperbola with $p-1=88$ points rather than $p+1$.
It turns out that $\ord(\lambda)=11$.
There are 5 orbits in total, 3 of size 22 plus 2 of size 11. Figure~\ref{fig:p89-orbits} provides a graphical representation of some of these orbits.

% \begin{figure}
% \begin{tikzpicture}[scale=1.25]
% \draw (0,0) node(111){111};
% \draw (1,0) node(211){211};
% \draw (2,0) node(251){251};
% \draw (-1,0) node(121){121};
% \draw (-2,0) node(521){521};
% %apply a sign change to third coordinate
% \draw (0,-1) node(11x){11X};
% \draw (1,-1) node(21x){21X};
% \draw (2,-1) node(25x){25X};
% \draw (-1,-1) node(12x){12X};
% \draw (-2,-1) node(52x){52X};
% %draw edges
% \draw (521)--(52x)--(12x)--(121)--(111)--(11x)--(21x)--(211)--(251)--(25x);
% \draw (521)--(121);
% \draw (12x)--(11x);
% \draw (111)--(211);
% \draw (21x)--(25x);
% \end{tikzpicture}
% \caption{
% One of the orbits for $m_1$, $m_2$, $m_3$ acting on the two conics $(*,*,\pm 1)$ for $p=11$. We write $X=-1$ for brevity. There are fixed points $2 \mapsto 2$ on triples of the form $(2,5,\pm 1)$.
% }
% \end{figure}
\begin{figure}[h]
\centering
\begin{tikzpicture}[scale=1, every node/.style={font=\scriptsize}]
\tikzset{
    point/.style={circle, fill=black, inner sep=1.1pt},
    medge/.style={line width=0.35pt},
    zedge/.style={line width=0.3pt, densely dotted}
}
\begin{scope}
    % \node at (0,1.55) {orbit of size $44$};
    \node at (0,1.25) {$22$ points with $z=1$};
    \node at (0,-1.25) {$22$ points with $z=-1$};
    \foreach \i in {0,...,21} {
        \pgfmathsetmacro{\ang}{-360*\i/22}
        \coordinate (a\i) at ({1.75*cos(\ang)},{0.48*sin(\ang)+0.45});
        \coordinate (b\i) at ({1.75*cos(\ang)},{0.48*sin(\ang)-0.45});
        \node[point] at (a\i) {};
        \node[point] at (b\i) {};
        \draw[zedge] (a\i)--(b\i);
    }
    \foreach \i in {0,...,20} {
        \pgfmathtruncatemacro{\j}{\i+1}
        \draw[medge] (a\i)--(a\j);
        \draw[medge] (b\i)--(b\j);
    }
    \draw[medge] (a21)--(a0);
    \draw[medge] (b21)--(b0);
    \node[right] at (a0) {$(1,1)$};
    \node[left] at (a11) {$(-1,-1)$};
\end{scope}
\begin{scope}[xshift=4.5cm]
    % \node at (1.65,1.55) {orbit of size $22$};
    \node at (1.65,1.25) {$11$ points with $z=1$};
    \node at (1.65,-1.25) {$11$ points with $z=-1$};
    \foreach \i in {0,...,10} {
        \coordinate (c\i) at ({0.33*\i},0.45);
        \coordinate (d\i) at ({0.33*\i},-0.45);
        \node[point] at (c\i) {};
        \node[point] at (d\i) {};
        \draw[zedge] (c\i)--(d\i);
    }
    \foreach \i in {0,...,9} {
        \pgfmathtruncatemacro{\j}{\i+1}
        \draw[medge] (c\i)--(c\j);
        \draw[medge] (d\i)--(d\j);
    }
    \draw[medge] (c0) .. controls +(-0.3,0.42) and +(0.3,0.42) .. (c0);
    \draw[medge] (d0) .. controls +(-0.3,-0.42) and +(0.3,-0.42) .. (d0);
    \draw[medge] (c10) .. controls +(-0.3,0.42) and +(0.3,0.42) .. (c10);
    \draw[medge] (d10) .. controls +(-0.3,-0.42) and +(0.3,-0.42) .. (d10);
    \node[above] at (c5) {$(-31,31)$};
    \node[right] at (c10) {$(28,42)$};
\end{scope}
\end{tikzpicture}
\caption{
Orbits for $m_1$, $m_2$, $m_3$ acting on the conics $(*,*,\pm 1)$ for $p=89$ and $a=(0,0,-3)$.
The dotted vertical edges are the sign change $m_3$.
The horizontal edges are generated by $m_1$ and $m_2$ on each conic.
}
\label{fig:p89-orbits}
\end{figure}

One orbit of size $22$ contains both $(1,1)$ and $(-1,-1)$. This orbit is preserved by the involutions
\begin{align*}
(x,y, 1) \mapsto (y,x, 1) \\
(x,y,1) \mapsto (-x,-y,1) \\
\end{align*}
However, $(x,y)=(6,29)$ and $(x,y)=(-6,-29)$ lie in different orbits of size 22 which are images of each other under $(x,y) \mapsto (-y,-x)$.
The points $(-31,31)$ and $(31,-31)$ lie in two orbits of size 11, bookended by fixed points such as $(28,42)$ or $(-42,-9)$. On these points either $m_1$ or $m_2$ acts by
\[
\pm 42 \mapsto \pm 42.
\]

\end{document}